\documentclass[10pt]{amsart}
\usepackage{amsfonts}
\usepackage{amsmath}
\usepackage{amsthm,amscd}
\usepackage{tikz-cd}
\usepackage{mathtools}
\usepackage{amssymb}
\usepackage{mathrsfs}
\usepackage{enumerate}
\usepackage[mathscr]{euscript}
\usepackage{etex}
\usepackage{hyperref}
\usepackage{svg}
\hypersetup{
    colorlinks=true,
    linkcolor=blue,
    filecolor=red,      
    urlcolor=teal,
    citecolor=teal,
}
\usepackage{color}

\usepackage{tikz}
\usepackage{xypic}
\usepackage{longtable}

\usepackage[T1]{fontenc}
\usepackage[utf8]{inputenc}

\usepackage{graphicx}


\pagestyle{myheadings}
\allowdisplaybreaks

\setlength{\textwidth}{\paperwidth}
\addtolength{\textwidth}{-3in}
\calclayout


\theoremstyle{plain}\newtheorem{Theorem}{Theorem}[section]
\theoremstyle{plain}\newtheorem{Corollary}[Theorem]{Corollary}
\theoremstyle{plain}\newtheorem{Lemma}[Theorem]{Lemma}
\theoremstyle{plain}\newtheorem{Definition}[Theorem]{Definition}
\theoremstyle{plain}
\theoremstyle{plain}\newtheorem{Proposition}[Theorem]{Proposition}
\theoremstyle{plain}
\theoremstyle{plain}
\theoremstyle{plain}
\theoremstyle{definition}\newtheorem{Remark}[Theorem]{Remark}
\theoremstyle{plain}\newtheorem*{Theorem*}{Theorem}
\theoremstyle{plain}\newtheorem*{Problem*}{Problem}
\theoremstyle{plain}\newtheorem*{Principle*}{Principle}

\theoremstyle{plain}

\makeatletter
\newtheorem*{rep@theorem}{\rep@title}
\newcommand{\newreptheorem}[2]{%
\newenvironment{rep#1}[1]{%
 \def\rep@title{#2 \ref{##1}}%
 \begin{rep@theorem}}%
 {\end{rep@theorem}}}
\makeatother
\theoremstyle{plain}\newreptheorem{theorem}{Theorem}

\theoremstyle{remark}
\theoremstyle{remark}\newtheorem{notation}[Theorem]{Notation}
\theoremstyle{remark}

\usepackage{amsmath,float}
\usetikzlibrary{decorations.markings}

\usepackage{graphicx}
\usepackage{subcaption}
\numberwithin{equation}{section}
\newcommand{\bA}{\mathbb{A}}

\newcommand{\bP}{\mathbb{P}}

\newcommand{\bZ}{\mathbb{Z}}

\newcommand{\clM}{\mathcal{M}}
\newcommand{\clN}{\mathcal{N}}
\newcommand{\clO}{\mathcal{O}}



\newcommand{\fm}{\mathfrak{m}}


\newcommand{\fF}{\mathfrak{F}}

\newcommand{\fS}{\mathfrak{S}}


\newcommand{\sC}{\mathscr{C}}
\newcommand{\sD}{\mathscr{D}}
\newcommand{\sE}{\mathscr{E}}

\newcommand{\sI}{\mathscr{I}}

\newcommand{\sL}{\mathscr{L}}

\newcommand{\sS}{\mathscr{S}}

\newcommand{\sX}{\mathscr{X}}
\newcommand{\sY}{\mathscr{Y}}


\newcommand{\Mbar}{\overline{\mathcal{M}}}
\DeclareMathOperator\Spec{Spec}
\DeclareMathOperator\Proj{Proj}

\DeclareMathOperator\Hom{Hom}

\DeclareMathOperator\rank{rank}

\begin{document}

\author{Fatemeh Rezaee}
\address{University of Cambridge}
\email{fr414@cam.ac.uk}
\author{Mohan Swaminathan}
\address{Tata Institute of Fundamental Research, Mumbai}
\email{mohans@math.tifr.res.in}

\title[An obstruction to smoothing stable maps]{An obstruction to smoothing stable maps}
\begin{abstract}
    We describe an obstruction to smoothing stable maps in smooth projective varieties, which generalizes some previously known obstructions. Our obstruction comes from the non-existence of certain rational functions on the ghost components, with prescribed simple poles and residues. We illustrate our result by giving examples of non-smoothable stable maps which are not detected by previous works.
\end{abstract}

\maketitle

\setcounter{tocdepth}{1}

\section{Introduction}

The main result of the present article is an obstruction to smoothing stable maps. This is a sequel to our previous work \cite{prequel}, where we gave a sufficient condition for smoothing stable maps, and is the next step towards the construction of a natural compactification of the space of smooth curves mapping to a variety which is smaller than the moduli space of stable maps.
We work over a fixed algebraically closed field $k$ of characteristic $0$ and let $X$ denote a smooth projective variety. We start by recalling some basic definitions.

\begin{Definition}[Prestable curve, stable map]\label{def:Prestable curve, stable map}
    A \emph{prestable curve} $\sC$ is a projective, connected, reduced curve with at worst ordinary nodal singularities. A \emph{stable map} is a morphism $f:\sC\to X$ from a prestable curve $\sC$ such that there are only finitely many automorphisms of $\sC$ which preserve the morphism $f$.
\end{Definition}

\begin{Definition}[Ghost, effective sub-curve]
    For a stable map $f:\sC\to X$, the \emph{ghost sub-curve} is the union of the irreducible components of $\sC$ on which $f$ is constant. Each connected component $C$ of the ghost sub-curve is called a \emph{ghost component} of $(\sC,f)$. The \emph{effective sub-curve} of $(\sC,f)$ is the closure of the complement of the ghost sub-curve.
\end{Definition}

\begin{Definition}[Eventual smoothability]
    Denote the moduli stack of stable maps to $X$, of all genera and degrees, by $\Mbar(X)$. Let $\clM(X)$ denote the open substack of $\Mbar(X)$ which parametrizes maps with non-singular domain curves. 
    
    We say that a stable map $f:\sC\to X$ is \emph{smoothable} if $(\sC,f)$, as a  point of $\Mbar(X)$, lies in the closure of $\clM(X)$. We say that $f:\sC\to X$ is \emph{eventually smoothable} if there exists an embedding $\iota:X\hookrightarrow X'$ into another smooth projective variety such that the map $\iota\circ f:\sC\to X'$ is smoothable.
\end{Definition}

\begin{Remark}
    In the definition of eventual smoothability, observe that it suffices to let $X'$ range over all projective spaces $\bP^N$, rather than all smooth projective varieties. This recovers the original definition given in \cite[Definition~1.2]{prequel}.
\end{Remark}

Smoothability implies eventual smoothability, but the converse is false in general. In \cite[Theorem~B]{prequel}, we identified a large class of stable maps which we call `stable maps with model ghosts' and showed that these are eventually smoothable.

\begin{Definition}[Residue obstruction, evaluation map]\label{def:residue-obstruction}
    Let $p$ be a non-singular closed point on a prestable curve $C$. The connecting map in the cohomology exact sequence associated to the short exact sequence $0\to\clO_C\to\clO_C(p)\to T_{C,p}\to 0$ is denoted by 
    \begin{align*}
        \delta_{C,p}:T_{C,p}\to H^1(C,\clO_C).
    \end{align*}
    Here, by a slight abuse of notation, we denote the skyscraper sheaf on $C$ supported at $p$ with stalk $T_{C,p}$ by the same symbol as for the stalk. The map $\delta_{C,p}$ corresponds under Serre duality to the map
    \begin{align*}
        \normalfont\text{ev}_{C,p}:H^0(C,\omega_C)\to T_{C,p}^\vee
    \end{align*}
    given by evaluation at $p$. Here, $\omega_C$ is the dualizing line bundle of $C$.
\end{Definition}
\begin{Definition}[First derivative]    
   For a stable map $f:\sC\to X$, and a non-singular closed point $p\in\sC$, we denote the derivative of $f$ at $p$ by
   \begin{align*}
       df_{p}: T_{\sC,p}\to T_{X,f(p)}.
   \end{align*}
\end{Definition}

We are now in a position to state our main result.

\begin{Theorem}[Obstruction to eventual smoothability]\label{thm:first-order-obstruction}
    Let $X$ be a smooth projective variety and let $f:\sC\to X$ be a non-constant stable map. 
    Let $C$ be a ghost component of $(\sC,f)$ mapping to a point $q\in X$ and let $\sE$ be the effective sub-curve. If $(\sC,f)$ is eventually smoothable, then the linear map
    \begin{align}\label{eqn:first-order-obstruction-map}
        \bigoplus_{p\in C\cap\sE}\delta_{C,p}\otimes d(f|_\sE)_{p}:\bigoplus_{p\in C\cap\sE} T_{C,p}\otimes T_{\sE,p}\to H^1(C,\clO_C)\otimes T_{X,q}
    \end{align}
    has a non-trivial kernel.
\end{Theorem}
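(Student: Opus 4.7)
The plan is to produce a nonzero kernel element of \eqref{eqn:first-order-obstruction-map} from the infinitesimal data of a one-parameter smoothing of $f$.

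First I would reduce to the case where $(\sC,f)$ is smoothable in $X$ itself. If $\iota: X\hookrightarrow X'$ is the closed embedding realizing the eventual smoothability, then the obstruction for $\iota\circ f$ is the composition of \eqref{eqn:first-order-obstruction-map} with $\mathrm{id}_{H^1(C,\clO_C)}\otimes d\iota_q$. Since $d\iota_q$ is injective, the kernels coincide, so we may take $X'=X$. Next, I would pick a smoothing $(\sC_R,f_R)\to X$ over $R=k[[t]]$ with smooth generic fiber. Near each node $p\in C\cap\sE$, with local coordinates $x,y$ on the $C$- and $\sE$-branches, the surface $\sC_R$ has local equation $xy=g_p(t)$ with $g_p(t)=\lambda_p t^{a_p}+O(t^{a_p+1})$, $\lambda_p\in k^\times$, $a_p\geq 1$. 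Setting $a=\min_p a_p$, define
\[
\tilde\alpha_p=\begin{cases}\lambda_p\,\partial_x\otimes\partial_y&\text{if }a_p=a,\\0&\text{if }a_p>a\end{cases}\quad\in T_{C,p}\otimes T_{\sE,p},
\]
giving a nonzero tuple $(\tilde\alpha_p)_p$; I claim it lies in the kernel of \eqref{eqn:first-order-obstruction-map}.

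To prove the claim, I would extract from $f_R$ the $t^a$-coefficient of the deformation of $f|_C$. With $X$-coordinates $z_i$ vanishing at $q$, the condition $f|_C\equiv q$ forces $f(z_i)=y\,h_i(y)$ near $p$ for some $h_i\in k[[y]]$, and hence $f_R(z_i)=y\,h_i(y)+t\,G_i(x,y,t)$ for some $G_i$, unique modulo $(xy-g_p(t))$. Parameterizing the deformed $C$-branch by $y=g_p(t)/x$ and reducing modulo $t^{a+1}$, the $t^a$-coefficient equals $\lambda_p h_i(0)/x+G_i^{(a-1)}(x,0)$ when $a_p=a$, and the regular expression $G_i^{(a-1)}(x,0)$ when $a_p>a$; here $G_i^{(a-1)}(x,0)$ denotes the coefficient of $t^{a-1}$ in $G_i(x,0,t)$. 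These local formulas glue to a global section $\sigma\in H^0(C,\clO_C(\sum_{a_p=a}p))\otimes T_{X,q}$ whose residue at each $p$ with $a_p=a$ equals $\lambda_p\,\partial_x\otimes d(f|_\sE)_p(\partial_y)\in T_{C,p}\otimes T_{X,q}$.

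To conclude, tensor the exact sequence $0\to\clO_C\to\clO_C(\sum_{a_p=a}p)\to\bigoplus_{a_p=a}T_{C,p}\to 0$ with $T_{X,q}$; the existence of the global section $\sigma$ forces its residue tuple to vanish under $\bigoplus_{a_p=a}\delta_{C,p}\otimes\mathrm{id}_{T_{X,q}}$, which upon unwinding is exactly the statement that $(\tilde\alpha_p)_p$ maps to zero under \eqref{eqn:first-order-obstruction-map}. The main obstacle is the global patching of $\sigma$: the local formulas depend on the choice of coordinates and on the non-unique lift $G_i$, and one must verify that modifying $G_i$ by a multiple of $(xy-g_p(t))$ leaves the relevant $t^{a-1}$-coefficient on $\{y=0\}$ unchanged, and that the residue $\lambda_p\,\partial_x\otimes d(f|_\sE)_p(\partial_y)$ really is coordinate-independent as an element of $T_{C,p}\otimes T_{X,q}$.
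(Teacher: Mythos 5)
Your overall skeleton coincides with the paper's: reduce to smoothability in $X$ itself, choose a one-parameter smoothing, extract from the smoothing a section of $\clO_C\bigl(\sum_{a_p=a}p\bigr)\otimes T_{X,q}$ whose only singularities are simple poles at the interface nodes with residues $\lambda_p\,\partial_x\otimes d(f|_\sE)_p(\partial_y)$, and conclude via the coboundary of $0\to\clO_C\to\clO_C(D)\to\bigoplus_{p\in D}T_{C,p}\to 0$ tensored with $T_{X,q}$ (this last step, and the reduction to $X'=X$, are exactly as in the paper). The genuine difference is that you compute directly in the singular local model $xy=g_p(t)$ via the substitution $y=g_p(t)/x$, whereas the paper resolves the $A_{m_i-1}$-singularities of the total space and propagates the residue along the exceptional chains $E_{i,1},\dots,E_{i,m_i-1}$; if completed, your route would bypass that machinery, with the leading coefficient $\lambda_p$ playing the role of the coordinate expression of $\Phi_{p_i}$.

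However, there is a genuine gap, and it is not where you locate it. You treat ``the $t^a$-coefficient of the deformation of $f|_C$'' as a well-defined function on $C$ minus the nodes and assert that your local formulas glue. But for $a\ge 2$ the $t^a$-coefficient of $f_R(z_i)$ with respect to chosen coordinates $(x,t)$ is not intrinsic: replacing $x$ by $x+t\,u(x,t)$ changes the coefficient of $t^N$ by terms involving derivatives of the coefficients of $t,\dots,t^{N-1}$. So $\sigma$ is well defined, and your charts agree on overlaps, only once one knows that the coefficients of $t,\dots,t^{a-1}$ of $\varphi\circ f_R$ along $C$ are \emph{constants}, which can then be subtracted so that one may keep dividing by $t$. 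Proving exactly this is the crux of the paper's argument: it is the inductive production of the constants $a_0,\dots,a_{l-1}$ in Lemma~\ref{lemma:main-coord-computation} (feeding into Proposition~\ref{prop:simple-pole-function}), where at each stage the current coefficient is shown to be regular on the whole compact connected ghost curve --- including across the interface points, where the pole order is bounded (in your model, by $\lfloor N/a_p\rfloor$), and across internal nodes of $C$, where the total space may again be singular and which your proposal never mentions --- and hence constant by properness and connectedness. Your proposal contains no such induction; the issues you flag as the main obstacle (non-uniqueness of the lift $G_i$, coordinate-independence of the residue) are minor by comparison. The needed induction can in fact be run in your singular model (the substitution $y=g_p(t)/x$ gives the pole bound at the $p_i$, and vanishing on the reduced central fiber gives divisibility by $t$ at internal ghost nodes), but it must be carried out; once it is, your argument becomes a coordinates-only variant of the paper's proof rather than a shortcut around its key step.
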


To obstruct eventual smoothability of a stable map $(\sC,f)$ using this theorem, one needs to analyze the map \eqref{eqn:first-order-obstruction-map}, which involves interactions between $\delta_{C,p}$ and $d(f|_\sE)_p$ for $p\in C\cap\sE$. The following corollary is easier to use as it sidesteps these interactions.

\begin{Corollary}\label{cor:rank-obstruction}
    Let $X$ be a smooth projective variety and let $f:\sC\to X$ be a non-constant eventually smoothable stable map with effective sub-curve $\sE$. If $C$ is any ghost component of $(\sC,f)$, mapping to $q\in X$, then there is a non-empty subset $D\subset C\cap\sE$ such that
    \begin{align}\label{eqn:rank-obstruction}
        \rank\left[\bigoplus_{p\in D}T_{\sE,p}\to T_{X,q}\right] + \rank\left[ H^0(C,\omega_C)\to\bigoplus_{p\in D}T_{C,p}^\vee\right]\le|D|.
    \end{align}
\end{Corollary}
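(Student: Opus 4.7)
The plan is to deduce the inequality from Theorem~\ref{thm:first-order-obstruction} by extracting, from a non-zero element of the kernel of~\eqref{eqn:first-order-obstruction-map}, an appropriate subset $D \subseteq C \cap \sE$ and translating the kernel condition into the desired rank inequality via Serre duality and an elementary linear-algebra lemma. First I would use that $p$ is a smooth point of both $C$ and $\sE$ --- since $p$ is an ordinary node of $\sC$ at which one branch of $C$ meets one branch of $\sE$ --- so that $T_{C,p}$ and $T_{\sE,p}$ are both one-dimensional. A non-zero element of the kernel can therefore be written as $\sum_{p\in C\cap\sE} t_p\otimes v_p$ with $t_p\in T_{C,p}$ and $v_p\in T_{\sE,p}$; after setting $\alpha_p:=\delta_{C,p}(t_p)$, $\beta_p:=df_p(v_p)$, and $D_0:=\{p:t_p\neq 0\text{ and }v_p\neq 0\}\neq\emptyset$, the kernel condition reads $\sum_{p\in D_0}\alpha_p\otimes\beta_p=0$ in $H^1(C,\clO_C)\otimes T_{X,q}$.

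Next I would reinterpret the two ranks in~\eqref{eqn:rank-obstruction} for any non-empty $D\subseteq D_0$. Because each $T_{\sE,p}$ is one-dimensional, the first rank equals $\dim\mathrm{span}\{\beta_p:p\in D\}$. For the second rank, I would invoke Definition~\ref{def:residue-obstruction} to identify, via Serre duality, the evaluation map $H^0(C,\omega_C)\to\bigoplus_{p\in D}T_{C,p}^\vee$ with the $k$-linear dual of $\bigoplus_{p\in D}\delta_{C,p}:\bigoplus_{p\in D}T_{C,p}\to H^1(C,\clO_C)$; its rank is therefore $\dim\mathrm{span}\{\alpha_p:p\in D\}$, again using that each $T_{C,p}$ is one-dimensional.

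Finally I would split into cases. If some $p_0\in D_0$ satisfies $\alpha_{p_0}=0$ or $\beta_{p_0}=0$, I set $D=\{p_0\}$; then one of the two ranks vanishes (since the relevant tangent space is one-dimensional) while the other is at most $1=|D|$, so~\eqref{eqn:rank-obstruction} holds. Otherwise I take $D=D_0$ and reduce to the following linear-algebra claim: whenever $\sum_{p\in D}\alpha_p\otimes\beta_p=0$ in $V\otimes W$ with every $\alpha_p$ and every $\beta_p$ non-zero, one has $\dim\mathrm{span}\{\alpha_p\}+\dim\mathrm{span}\{\beta_p\}\le|D|$. I would prove this by assembling the $\alpha_p$ and $\beta_p$ into the rows of matrices $A$ and $B$ (landing in the spaces they span); the vanishing of the tensor sum is equivalent to $A^\top B=0$, so the column span of $B$ lies in the orthogonal complement of that of $A$ inside $k^{|D|}$, which forces $\rank(A)+\rank(B)\le|D|$. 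The one point requiring careful bookkeeping is the Serre-duality identification in the second paragraph; the rest is direct linear algebra.
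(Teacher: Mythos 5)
Your proof is correct and takes essentially the same route as the paper: extract a nonzero kernel element of \eqref{eqn:first-order-obstruction-map} via Theorem~\ref{thm:first-order-obstruction}, let $D$ be its support, interpret the second rank through Serre duality, and finish with a rank--nullity argument (your matrix statement $A^\top B=0\Rightarrow\rank A+\rank B\le|D|$ is exactly the paper's ``zero composition'' step in coordinates). The only difference is cosmetic: your case split and the nonvanishing hypotheses on $\alpha_p,\beta_p$ are unnecessary, since the matrix inequality holds without them, but this is harmless.
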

\begin{proof}
    Use Theorem~\ref{thm:first-order-obstruction} to find a nonzero element $x = (x_p)_{p\in C\cap\sE}$ in the kernel of \eqref{eqn:first-order-obstruction-map}. Define $D\subset C\cap\sE$ to consist of those $p$ for which $x_p$ is nonzero and therefore provides an isomorphism $x_p:T_{C,p}^\vee\to T_{\sE,p}$. Since \eqref{eqn:first-order-obstruction-map} maps $x$ to zero, the composition
    \begin{align*}
        H^0(C,\omega_C)\to\bigoplus_{p\in D}T_{C,p}^\vee\xrightarrow{\sim}\bigoplus_{p\in D}T_{\sE,p}\to T_{X,q}
    \end{align*}
    must be zero. The result now follows from the rank-nullity theorem.
\end{proof}

\begin{Remark}
    The class of examples discussed in \textsection\ref{subsubsec:example-1} demonstrate that the obstruction to eventual smoothability provided by Corollary~\ref{cor:rank-obstruction} is strictly weaker than the one coming from Theorem~\ref{thm:first-order-obstruction}. See Remark~\ref{rem:thm-stronger-than-corollary} for more details.
\end{Remark}

Theorem~\ref{thm:first-order-obstruction} and Corollary~\ref{cor:rank-obstruction} provide obstructions to eventual smoothability which are insensitive to the global geometry of $X$. Rather, they are sensitive only to the behavior of the stable map near its ghost components. This is consistent with the {`local criterion for eventual smoothability'} \cite[Theorem~A]{prequel}.

\subsection*{Outline of proof} We briefly explain the idea behind the proof of Theorem \ref{thm:first-order-obstruction}. If the stable map $f:\sC\to X$ is smoothable, then we obtain a family of stable maps 
\begin{center}
\begin{tikzcd}
    \sS \arrow[d] \arrow[r,"F"] & X \\
    \Delta &
\end{tikzcd}
\end{center}
over a disk $\Delta$ with coordinate $t$, with the following property. Its fibre over $t=0$ is identified with $f:\sC\to X$, and its general fibre is a smooth curve mapping to $X$. We expand $F:\sS\to X$ near the ghost component $C\subset\sS$ as a power series $F = F_0 + F_1t + F_2t^2 + \cdots$, in local coordinates centred at the point $q = f(C)\in X$. 

Near each $p\in C\cap\sE$, the surface $\sS$ is given by $xy=t^{m_p}$, for some $m_p\ge 1$. Define $m = \min_{p\in C\cap\sE} m_p$, and let $Z\subset C\cap\sE$ be the set where this minimum is attained. The key observation is that the rational function $F_m:C\dashrightarrow T_{X,q}$ is regular away from $Z$, and at each $p\in Z$, it has (at worst) a simple pole with residue proportional to $d(f|_\sE)_p$. Thus, the obstruction to the existence of this rational function with prescribed simple poles and residues, which lies in $H^1(C,\clO_C)\otimes T_{X,q}$, must vanish.

\subsection*{Relations to previous work} The question of eventual smoothability of stable maps has been studied in many earlier works in algebraic and symplectic geometry, albeit using different terminology. We give a brief overview here and refer the reader to \cite[\textsection 1.3]{prequel} for a more thorough discussion.

Algebraic geometry: The genus $1$ case of Theorem~\ref{thm:first-order-obstruction} was obtained in \cite[Lemma~5.9]{Vakil-genus01}. When $X$ is a surface, obstructions to smoothing for general genus are given \cite[Lemma~2.4.1]{Vakil-stable-red} by comparing the stable map limit of a family of curves with the limit in the Hilbert scheme. It should be possible to derive obstructions for general genus and $X$ of general dimension, using the compactification of $\clM(X)$ by \emph{stable unramified maps}, constructed in \cite{KKO}; this is because this compactification has a natural non-surjective forgetful map to $\Mbar(X)$. Additionally, smooth modular compactifications of $\clM_{g,n}(\bP^N,d)$ are known for $g = 1$ \cite{RSPW19} and $g = 2$ \cite{BC23}; these are closely related to their earlier counterparts in the more general symplectic setting mentioned in the next paragraph.

Symplectic geometry: Optimal obstructions to eventual smoothability of stable maps are known in genus $1$ \cite{Zinger-sharp-compactness} and genus $2$ \cite{Niu-thesis}. Comparing with these two works, we find that Theorem~\ref{thm:first-order-obstruction} recovers the optimal obstruction for genus $1$ but not for genus $2$ (cf. Remark~\ref{rem:dim-count-vs-obstruction}). For the situation where a ghost component of general genus meets the effective sub-curve in at most two points, obstructions to eventual smoothability were obtained in \cite{DW} and \cite{ekholm-shende-ghost}. For algebraic target manifolds, Corollary~\ref{cor:rank-obstruction} extends the results of these two works. We point out that our map \eqref{eqn:first-order-obstruction-map} is the algebro-geometric analogue of the `leading order term of the obstruction on ghost components' from \cite[Lemma~5.47]{DW}.

\subsection*{Structure of the paper}
In section~\ref{sec:examples}, we give examples of stable maps whose eventual smoothability can be ruled out using Theorem~\ref{thm:first-order-obstruction}. The examples we give demonstrate that Theorem~\ref{thm:first-order-obstruction} does not follow from Corollary~\ref{cor:rank-obstruction} or the results of \cite{Vakil-genus01, Vakil-stable-red, Zinger-sharp-compactness, Niu-thesis, DW, ekholm-shende-ghost}. In section~\ref{sec:proof}, we prove Theorem~\ref{thm:first-order-obstruction}.

\subsection*{Acknowledgements}

We would like to thank Ben Church, Mark Gross, Eleny Ionel, Eric Kilgore, Melissa Liu, Rahul Pandharipande, John Pardon, Dhruv Ranganathan, Richard Thomas and Ravi Vakil for useful conversations and encouragement. We also thank the anonymous referee for their careful reading of the paper and for their useful comments and suggestions. F.R. was supported by UKRI grant No. EP/X032779/1, and the ERC Advanced Grant MSAG. 

\subsection*{Conventions} All schemes we consider will be quasi-projective over the fixed algebraically closed field $k$ of characteristic $0$. The word \emph{variety} is reserved for reduced and irreducible schemes. The notation $\bA^n$ (resp. $\bP^n$) denotes the affine (resp. projective) $n$-space over $k$. Tensor products and dimensions written without subscripts are understood to be over $k$.

For a scheme $S$ and a point $p\in S$, its local ring at $p$ is denoted by $\clO_{S,p}$ and its maximal ideal is denoted by $\fm_{S,p}$. An \emph{\'etale neighborhood} of $p$ in $S$ is a scheme $U$ with an \'etale morphism $U\to S$ under which the inverse image of $p$ is a single point. 

When $p$ is a closed point of the scheme $S$, we denote the Zariski tangent space of $S$ at $p$ by $T_{S,p} = ({\fm_{S,p}}/{\fm_{S,p}^2})^\vee$. When $S$ is smooth of dimension $n$ at the closed point $p$, then elements $z_1,\ldots,z_n\in\clO_{S,p}$ are called \emph{local coordinates} for $S$ at $p$ if they generate the maximal ideal $\fm_{S,p}$. The images $dz_1,\ldots,dz_n$ of the local coordinates $z_1,\ldots,z_n$ in ${\fm_{S,p}}/{\fm_{S,p}^2} = T_{S,p}^\vee$ form a basis, and we denote the corresponding dual basis of $T_{S,p}$ by $\partial_{z_1},\ldots,\partial_{z_n}$.

When $Z$ is an local complete intersection subscheme of a smooth variety $Y$ defined by the ideal sheaf $\sI$, the \emph{normal bundle} of $Z\subset Y$ is the locally free sheaf on $Z$ given by $N_{Z/Y}=({\sI}/{\sI^2})^\vee$. We often use the fact that when $D$ is a reduced effective divisor in the smooth variety $Y$, we have a natural identification $N_{D/Y}^{\otimes m} = \clO_Y(mD)|_D$ for all $m\in\bZ$. In particular, when $p\in C$ is a non-singular closed point on a curve, we have $N_{p/C} = T_{C,p}$ and therefore, we have $\clO_C(mp)|_p = T_{C,p}^{\otimes m}$ for all $m\in\bZ$.
\section{Non-smoothable stable maps to projective space}\label{sec:examples}

In this section, we give some concrete examples to illustrate Theorem~\ref{thm:first-order-obstruction}. Before discussing the examples, we use an elementary dimension counting argument to show that, in projective space, stable maps with ghost components are usually not smoothable.

\subsection{Dimension counting}\label{subsec:dim-count}

Fix integers $N,g,d\ge 1$ with $d\ge 2g-1$, and consider the moduli stack $\Mbar := \Mbar_g(\bP^N,d)$ of stable genus $g$ maps of degree $d$ to $\bP^N$. 

For any smooth projective curve $\sC$ of genus $g$, any degree $d$ morphism $f:\sC\to\bP^N$ has unobstructed deformations, since we have $H^1(\sC,f^*T_{\bP^N}) = 0$. This comes from the Euler exact sequence $0\to\clO_{\bP^N}\to\clO_{\bP^N}(1)^{\oplus(N+1)}\to T_{\bP^N}\to 0$, and 
\begin{align*}
    H^1(\sC,f^*\clO_{\bP^N}(1)) = H^0(\sC,\omega_\sC\otimes f^*\clO_{\bP^N}(-1))^\vee = 0,    
\end{align*}
using Serre duality. Thus, the open substack $\clM:=\clM_g(\bP^N,d)$ of $\Mbar$ parametrizing stable maps with non-singular domain curves is smooth and has dimension
\begin{align*}
    \dim \clM = \chi(\sC,f^*T_{\bP^N}) + \dim\clM_g = (N-3)(1-g) + d(N+1).
\end{align*}

\begin{Remark}
    For degrees $1\le d\le 2g-2$, the situation is more complicated and is related to Brill--Noether theory. The existence of special divisors in low degrees prevents us from using the argument above to conclude that the stack $\clM_g(\bP^N,d)$ is smooth of the expected dimension.
\end{Remark}

Let us now describe a stratum within $\Mbar\setminus\clM$. Let $n,h\ge 1$ be integers with $h\le g$. Let $d_1,\ldots,d_n\ge 1$ and $g_1,\ldots,g_n\ge 0$ be integers such that $d_i\ge 2g_i$, and we have $d = \sum d_i$ and $g = h + \sum g_i$. For instance, we could take $n = 1$ and $h=g$. Consider the stack
\begin{align*}
    \clN:=\clM_{h,n}\times(\clM_{g_1,1}(\bP^N,d_1)\times_{\bP^N}\cdots\times_{\bP^N}\clM_{g_n,1}(\bP^N,d_n))
\end{align*}
parametrizing the data of an $n$-pointed smooth projective genus $h$ curve $(C,p_1,\ldots,p_n)$ and, for each $1\le i\le n$, a degree $d_i$ morphism $f_i:\sE_i\to \bP^N$ from a pointed smooth projective genus $g_i$ curve $(\sE_i,p_i')$ such that $f_1(p_1') = \cdots = f_n(p_n')$. We have a natural morphism
\begin{align*}
    \iota:\clN\to\Mbar,
\end{align*}
given by mapping $(C,p_1,\ldots,p_n),(\sE_1,p_1',f_1),\ldots,(\sE_n,p_n',f_n')$ to the stable map $(\sC,f)$, where $\sC$ is the prestable curve obtained from $C\sqcup\bigsqcup \sE_i$ by making the identifications $p_i\sim p_i'$, and $f$ is the morphism which restricts to $f_i$ on $\sE_i$ and is constant on $C$. The morphism $\iota$ from $\clN$ onto its image is finite-to-one. For $1\le i\le n$, the morphism
\begin{align*}
    \clM_{g_i,1}(\bP^N,d_i)&\to\bP^N
    \\(f_i:\sE_i\to X,p_i')&\mapsto f_i(p_i')
\end{align*}
given by evaluation at the marked point is smooth since Serre duality gives
\begin{align*}
    H^1(\sE_i,\clO_{\sE_i}(-p_i')\otimes f_i^*\clO_{\bP^N}(1)) = H^0(\sE_i,\omega_{\sE_i}(p_i')\otimes f_i^*\clO_{\bP^N}(-1))^\vee = 0.    
\end{align*}
Thus, $\clN$ is smooth and has dimension
\begin{align*}
    \dim\clN &= \dim\clM_{h,n} - N(n-1) + \sum_{i=1}^{n}\big(\chi(\sE_i,f_i^*T_{\bP^N})+\dim\clM_{g_i,1}\big)
    \\
    &= 3h-3+n-N(n-1) + \sum_{i=1}^{n}\big( (N-3)(1-g_i)+d_i(N+1) +1\big) \\
    &= \dim\clM + Nh-n.
\end{align*}
When $Nh-n\ge 0$, we get $\dim\clN\ge \dim\clM$, which implies that most points of $\iota(\clN)$ are not contained in the closure of $\clM$. More precisely, the pullback of the closure of $\clM$ under the morphism $\iota$ is of codimension $\ge Nh-n+1$ in $\clN$.

\begin{Remark}\label{rem:dim-count-vs-obstruction}
    It is instructive to compare the conclusion obtained above via dimension counting with what our main result gives in this situation.
    
    Theorem~\ref{thm:first-order-obstruction} asserts that \eqref{eqn:first-order-obstruction-map}, which is a linear map from an $n$-dimensional vector space to an $Nh$-dimensional vector space, must have a non-trivial kernel at all points of $\iota(\clN)$ that lie in the closure of $\clM$. When $Nh\ge n$, having a non-trivial null-space is a codimension $Nh-n+1$ condition on the space of $Nh\times n$ matrices. Thus, the above dimension count is consistent with what our main result predicts.

    However, the same dimension count can also be used to show the existence of points in $\iota(\clN)$ whose non-smoothability  cannot be detected by our main result. Specifically, consider the case where $n = 1$ and simplify the notation by writing $\sE = \sE_1$, $p' = p'_1$ and $p = p_1$. In this situation, the map $\delta_{C,p}$ is always injective and Theorem~\ref{thm:first-order-obstruction} simply says that points of $\iota(\clN)$ lying in the closure of $\clM$ must satisfy $d(f|_\sE)_{p'} = 0$. After pulling back via $\iota$, the vanishing of $d(f|_\sE)_{p'}$ defines a locus in $\clN$ of codimension $\le N$. On the other hand, we know that the points of $\clN$ whose image under $\iota$ lies in the closure of $\clM$ form a locus of codimension $\ge Nh-n+1 = Nh$ in $\clN$. In particular, when $h>1$, we deduce the existence of non-smoothable stable maps in $\iota(\clN)$ which nevertheless satisfy $d(f|_\sE)_{p'} = 0$.
\end{Remark}

\subsection{Concrete examples}\label{subsec:concrete-examples}
Keeping the above setup, we now describe three classes of examples which illustrate Theorem \ref{thm:first-order-obstruction}.

\subsubsection{Examples in genus one} With notation as in \textsection\ref{subsec:dim-count}, take
\begin{align*}
    N,n\ge 1, \quad h=1, \quad g_1=\cdots=g_n=0 \quad\text{and}\quad d_1,\ldots,d_n\ge 1.
\end{align*}

Let $C$ be a smooth projective curve of genus $1$. Then the map
\begin{align*}
    \delta_{C,p}:T_{C,p}\to H^1(C,\clO_C).
\end{align*}
is an isomorphism for any closed point $p\in C$. Choose a collection of distinct closed points $p_i\in C$ for $1\le i\le n$. Let $q\in\bP^N$ be a closed point. For each $1\le i\le n$, let $(\sE_i,p_i')$ be a copy of $(\bP^1,0)$ and choose a morphism $f_i:\sE_i\to\bP^N$ of degree $d_i$ with the property that $f_i(p_i') = q$.

We now define a stable map $f:\sC\to\bP^N$ as follows. Define $\sC$ to be the prestable curve obtained from $C\sqcup\bigsqcup\sE_i$ by making the identifications $p_i\sim p_i'$. Define $f$ to be the constant morphism with value $q$ on $C$ and the morphism $f_i$ on $\sE_i$, for $1\le i\le n$. By construction, the map \eqref{eqn:first-order-obstruction-map} is injective if and only if the map
\begin{align}\label{eqn:genus-1-first-order-obstruction-map}
    \bigoplus_{1\le i\le n} d(f|_{\sE_i})_{p_i}:\bigoplus_{1\le i\le n} T_{\sE_i,p_i}\to T_{X,q}
\end{align}
is injective. We conclude from Theorem~\ref{thm:first-order-obstruction} that, if the stable map $f:\sC\to \bP^N$ is eventually smoothable, then the map \eqref{eqn:genus-1-first-order-obstruction-map} has a non-trivial kernel. This is exactly the content of \cite[Lemma 5.9]{Vakil-genus01} in this case. 

In fact, \cite{Zinger-sharp-compactness} shows that even more is true: the smoothability of the stable map $f:\sC\to\bP^N$ is equivalent to the map \eqref{eqn:genus-1-first-order-obstruction-map} having a non-trivial kernel.

\subsubsection{First class of examples in higher genus}\label{subsubsec:example-1} 
With notation as in \textsection\ref{subsec:dim-count}, take
\begin{align*}
    N,h\ge 2, \quad n = Nh, \quad g_1 = \cdots = g_n = 0 \quad\text{and}\quad d_1 = \cdots = d_n = 1.    
\end{align*}

Let $C$ be a smooth projective genus $h$ curve. Choose a collection of distinct closed points $p_{i,j}\in C$ for $1\le i\le N$ and $1\le j\le h$, with the following property: for each $1\le i\le N$, the points $\{p_{i,j}\}_{1\le j\le h}$ have linearly independent images under the natural morphism 
\begin{align}\label{eqn:canonical-morphism}
    \iota_C: C\to \bP H^0(C,\omega_C)^\vee,   
\end{align}
defined by the canonical bundle $\omega_C$ of $C$. Note that since $C$ is smooth, its canonical bundle is the same as its dualizing line bundle. From Serre duality, it follows that the map
\begin{align*}
    \bigoplus_{1\le j\le h}\delta_{C,p_{i,j}}:\bigoplus_{1\le j\le h}T_{C,p_{i,j}}\to H^1(C,\clO_C)
\end{align*}
is an isomorphism for $1\le i\le N$. Let $q\in\bP^N$ be a closed point and let $L_1,\ldots,L_N\subset\bP^N$ be lines passing through $q$ such that we have
\begin{align*}
    T_{\bP^N,q} = \bigoplus_{1\le i\le N} T_{L_i,q}.
\end{align*}
For each $1\le i\le N$ and $1\le j\le h$, let $(\sE_{i,j},p'_{i,j})$ be a copy of $(L_i,q)$.

We now define a stable map $f:\sC\to\bP^N$ as follows. Define $\sC$ to be the prestable curve obtained from $C\sqcup\bigsqcup \sE_{i,j}$ by making the identifications $p_{i,j}\sim p_{i,j}'$. The morphism $f$ is defined to be the constant morphism with value $q$ on $C$ and the inclusion $\sE_{i,j}\simeq L_i\subset \bP^N$ for $1\le i\le N$ and $1\le j\le h$. Figure~\ref{fig:h=4,N=3} illustrates the case where $N=3$ and $h=4$.

\begin{figure}[h]
    \centering
    \includegraphics[width=0.95\linewidth]{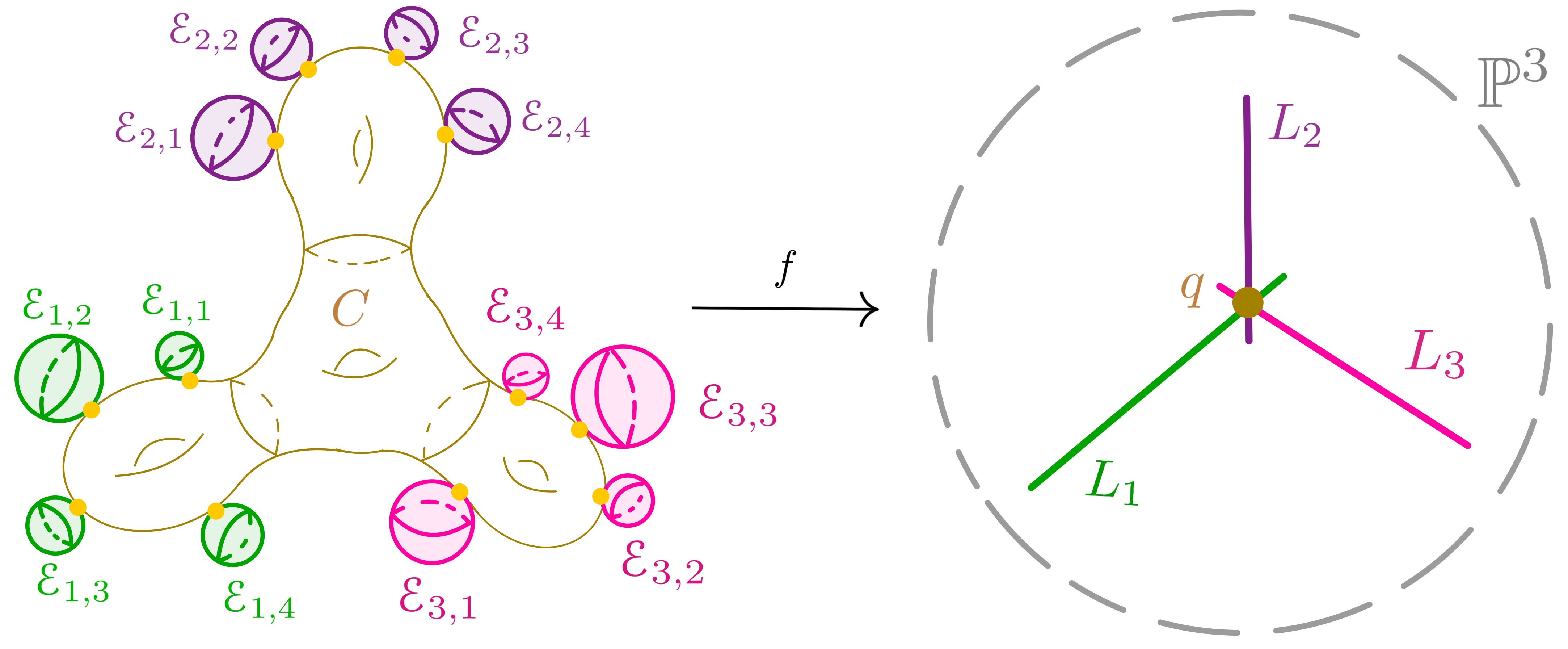}
    \caption{An example of a stable map which is not eventually smoothable, obtained by taking $N=3$, $h=4$ and $n=Nh=12$ in \textsection\ref{subsubsec:example-1}. For $1\le i\le 3$, the curves $\sE_{i,1},\ldots,\sE_{i,4}$ are attached at four general points of $C$. 
    }
    \label{fig:h=4,N=3}
\end{figure}

By construction, the map \eqref{eqn:first-order-obstruction-map} is an isomorphism. We conclude from Theorem~\ref{thm:first-order-obstruction} that the stable map $f:\sC\to\bP^N$ is not eventually smoothable.

\begin{Remark}\label{rem:thm-stronger-than-corollary}
   Since we assumed $N,h\ge 2$, we have $n = Nh\ge N+h$, implying that the inequality \eqref{eqn:rank-obstruction} holds when we take $D$ to consist of all the points $p_{i,j}$. This demonstrates that Theorem~\ref{thm:first-order-obstruction} is strictly stronger than Corollary~\ref{cor:rank-obstruction}. 
\end{Remark}

\subsubsection{Second class of examples in higher genus}

With notation as in \textsection\ref{subsec:dim-count}, take
\begin{align*}
    N = 2,\quad h\ge 2, \quad n = 2h, \quad g_1 = \cdots = g_n = 0 \quad\text{and}\quad d_1 = \cdots = d_n = 1.    
\end{align*}
Let $C$ be a smooth genus $h$ curve which is \emph{hyperelliptic}, i.e., the morphism \eqref{eqn:canonical-morphism} realizes $C$ as a branched double cover of a rational normal curve\footnote{A \emph{rational normal curve} in $\bP^m$ is a smooth rational curve of degree $m$ which is not contained in any hyperplane. In suitable coordinates, it is parametrized by $[s:t]\mapsto[s^m:s^{m-1}t:\cdots:t^m]$.} $D\subset\bP H^0(C,\omega_C)^\vee$. Let $\iota:C\to C$ be the non-trivial automorphism of this double cover. Choose a collection of distinct closed points $p_{1,j}\in C$ with $1\le j\le h$ such that none of them are fixed by $\iota$ and their images under \eqref{eqn:canonical-morphism} are linearly independent. Thus, the map
\begin{align*}
    \bigoplus_{1\le j\le h}\delta_{C,p_{1,j}}:\bigoplus_{1\le j\le h}T_{C,p_{1,j}}\to H^1(C,\clO_C)
\end{align*}
is an isomorphism. For each $1\le j\le h$, define $p_{2,j} = \iota(p_{1,j})\in C$ and note that the maps $\delta_{C,p_{i,j}}$ for $i=1,2$ have a common line $\ell_j\subset H^1(C,\clO_C)$ as their image.

Let $q\in\bP^2$ be a closed point and let $L_1,\ldots,L_h\subset\bP^2$ be a collection of distinct lines passing through $q$. For $1\le j<l\le h$, we then have
\begin{align*}
    T_{\bP^2,q} = T_{L_j,q}\oplus T_{L_l,q}.
\end{align*}
For $1\le i\le 2$ and $1\le j\le h$, let $(\sE_{i,j},p'_{i,j})$ be a copy of $(L_j,q)$. 

Let $\sigma\in\fS_h$ be a permutation of $\{1,\ldots,h\}$. Define $\sC_\sigma$ to be the prestable curve obtained from $C\sqcup\bigsqcup \sE_{i,j}$ by making the identifications
\begin{align*}
    p_{1,j}\sim p_{1,j}'\quad\text{and}\quad  p_{2,j}\sim p_{2,\sigma(j)}'.  
\end{align*}
Define the stable map $f_\sigma:\sC_\sigma\to\bP^2$ to be the constant morphism with value $q$ on $C$ and the inclusion $\sE_{i,j}\simeq L_j\subset \bP^2$ for $1\le i\le 2$ and $1\le j\le h$. Figure \ref{fig:h=3,N=2 nonsmoothable} illustrates the case where $h=3$ and $\sigma = (231)\in\fS_3$.

\begin{figure}[h]
    \centering
\includegraphics[width=1.02\linewidth]{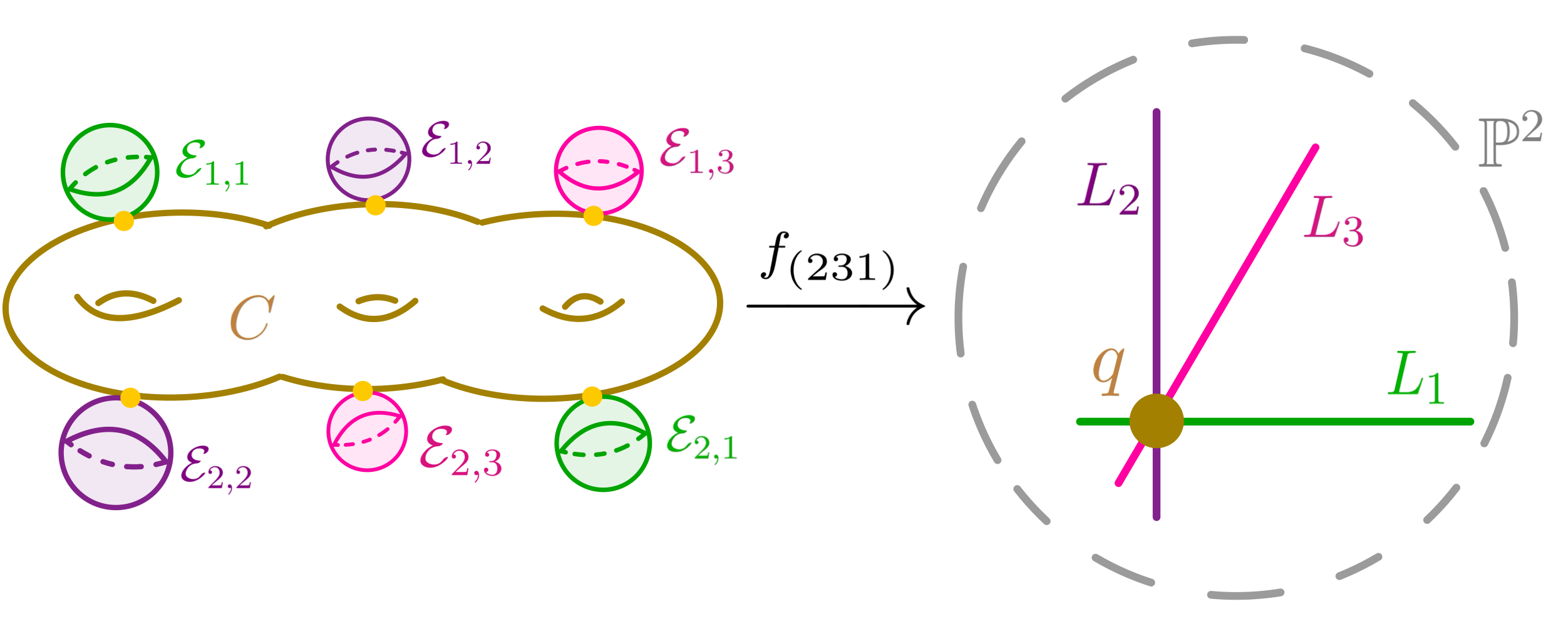}
    \caption{An example of a non-eventually smoothable stable map $f_{\sigma}$ for $h = 3$ and $\sigma=(231)\in\fS_3$ as in Lemma \ref{Lem: sigmaNoFixedPoint}. 
    }
    \label{fig:h=3,N=2 nonsmoothable}
\end{figure}

\begin{Lemma}\label{Lem: sigmaNoFixedPoint}
    If $\sigma$ has no fixed points, then $f_\sigma:\sC_\sigma\to\bP^2$ is not eventually smoothable.
\end{Lemma}
\begin{proof}
    For each $1\le j\le h$, note that
    \begin{enumerate}[(i)]
        \item $\delta_{C,p_{1,j}}$ and $\delta_{C,p_{2,j}}$ have the same line $\ell_j\subset H^1(C,\clO_C)$ as their image, and
        \item we have $T_{\bP^2,q} = T_{L_j,q}\oplus T_{L_{\sigma(j),q}}$ since $j\ne\sigma(j)$.
    \end{enumerate}
    Thus, the map \eqref{eqn:first-order-obstruction-map} is an isomorphism. The claim now follows from Theorem \ref{thm:first-order-obstruction}.    
\end{proof}

\begin{Lemma} \label{Lem: sigma=ID}
    If $\sigma = \normalfont\text{id}\in\fS_h$, then $f_\sigma:\sC_\sigma\to\bP^2$ is smoothable.
\end{Lemma}
\begin{proof}
    Write $\sC = \sC_\text{id}$ and $f = f_\text{id}$. Factor the canonical morphism \eqref{eqn:canonical-morphism} through a double cover $\varphi:C\to D$ where $D\subset\bP H^0(C,\omega_C)^\vee$ is a rational normal curve. 
    
    For $1\le j\le h$, let $p_j = \varphi(p_{1,j}) = \varphi(p_{2,j})\in D$ and let $(\sE_j,p'_j)$ be a copy of $(L_j,q)$. Let $\sD$ be the prestable curve obtained from $D\sqcup\bigsqcup\sE_j$ by making the identifications $p_j\sim p_j'$. We have a natural map $\psi:\sC\to\sD$ given by mapping $C$ to $D$ via $\varphi$ and mapping $\sE_{i,j}$ to $\sE_j$ by the identity map for $1\le i\le 2$ and $1\le j\le h$. It is clear that we have a factorization $f = g\circ\psi$ for a morphism $g:\sD\to\bP^2$. See Figure \ref{fig:h=3,N=2 smoothable}.

    \begin{figure}[h]
    \centering
    \includegraphics[width=0.95\linewidth]{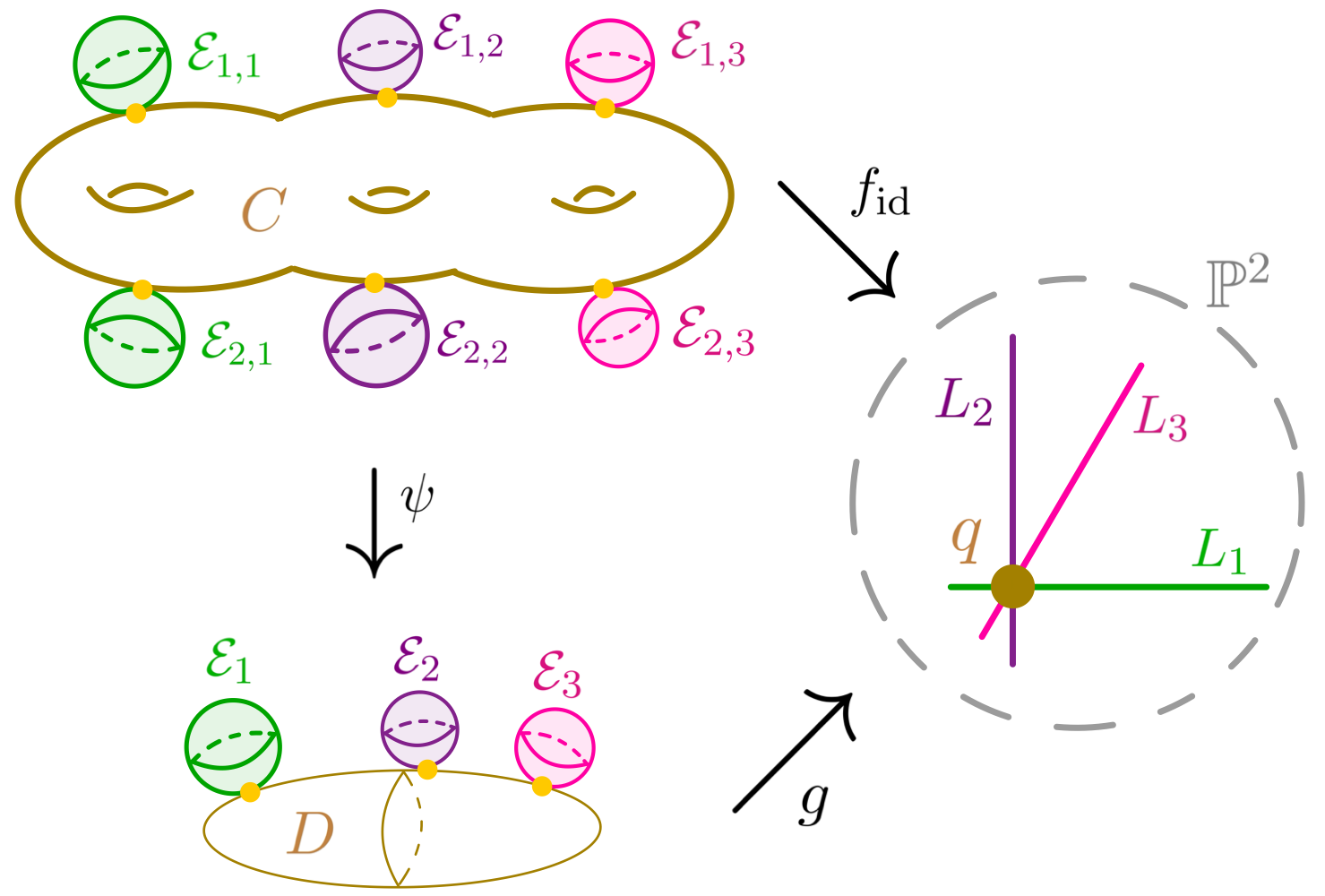}
    \caption{The smoothable stable map $f_{\text{id}} = g\circ\psi$ from Lemma \ref{Lem: sigma=ID} in the case where $h=3$. 
    }
    \label{fig:h=3,N=2 smoothable}
    \end{figure}

    Define $\pi_\sY:\sY\to\bA^1$ by blowing up $D\times\bA^1$ at the points $(p_j,0)$ for $1\le j\le h$. Similarly, define $\pi_\sX:\sX\to\bA^1$ by blowing up $C\times\bA^1$ at the points $(p_{i,j},0)$ for $1\le i\le 2$ and $1\le j\le h$. Then, $\varphi\times\text{id}_{\bA^1}$ induces a morphism $\Psi:\sX\to\sY$ of families over $\bA^1$ whose restriction $\pi_\sX^{-1}(0)\to\pi_\sY^{-1}(0)$ is identified with $\psi:\sC\to\sD$.
    
    Pick a deformation of $g:\sD\to\bP^2$ to the generic fibre of the family $\pi_\sY:\sY\to\bA^1$. This is possible since the relevant obstructions lie in $H^1(\sD,g^*T_{\bP^2}) = 0$.\footnote{The obstruction vanishes due to the Euler exact sequence $0\to\clO_{\bP^2}\to\clO_{\bP^2}(1)^{\oplus 3}\to T_{\bP^2}\to 0$ and the fact that the pullback of $\clO_{\bP^2}(1)$ has vanishing $H^1$ on the genus $0$ curve $\sD$.} Composing this deformation with $\Psi:\sX\to\sY$ yields a smoothing of $f = g\circ\psi:\sC\to\bP^2$.
\end{proof}

\begin{Remark}
    The stable maps $f_\sigma:\sC_\sigma\to\bP^2$ for different choices of $\sigma\in\fS_h$ have the same image and are obtained from one another by permuting the components of the effective sub-curve. Thus, a purely numerical obstruction to smoothing (such as the one in \cite{Vakil-stable-red} based on $\delta$-invariants) cannot distinguish between them.
\end{Remark}

\section{Proof of the main result}\label{sec:proof}
In this section, we prove Theorem~\ref{thm:first-order-obstruction}. Consider a non-constant stable map $f:\sC\to X$, with effective sub-curve $\sE$ and a ghost component $C$ mapping to $q\in X$. Assuming that $f:\sC\to X$ is eventually smoothable, we will produce a nonzero element in the kernel of the linear map \eqref{eqn:first-order-obstruction-map}. We divide the argument into steps for clarity. 

\subsection{Reduction from eventual smoothability to smoothability}

As $f:\sC\to X$ is eventually smoothable, there is an embedding $\iota:X\hookrightarrow X'$ into another smooth projective variety such that $\iota\circ f:\sC\to X'$ is smoothable. The answer to whether or not the map \eqref{eqn:first-order-obstruction-map} is injective remains unchanged when we replace $f:\sC\to X$ by $\iota\circ f:\sC\to X'$. Therefore, replacing $X$ by $X'$, we assume for the remainder that $f:\sC\to X$ is smoothable.

\subsection{Smoothing family and its resolution}\label{subsec:Smoothing family and its resolution}
Since $f:\sC\to X$ is smoothable, we may choose a smooth affine algebraic curve $\Delta$ with a closed point $s\in\Delta$, a flat family $\pi:\sS\to\Delta$ of prestable curves and a morphism $F:\sS\to X$ with the following properties.
\begin{enumerate}[(a)]
    \item The restriction of $F:\sS\to X$ over $s\in\Delta$ is identified with $f:\sC\to X$.
    \item The restriction of $\pi$ over $\Delta\setminus\{s\}$ is a smooth morphism $\sS\setminus\sC\to\Delta\setminus\{s\}$.
    \item There is a regular function $t$ on $\Delta$ which vanishes precisely at $s$ and restricts to a local coordinate for $\Delta$ at $s$. Thus, we may regard $\Gamma(\Delta,\clO_{\Delta})$ as an \'etale $k[t]$-algebra.
\end{enumerate}

Write $C\cap\sE = \{p_1,\ldots,p_n\}$. For $1\le i\le n$, using the \'etale local description of a family of nodal curves \cite[\href{https://stacks.math.columbia.edu/tag/0CBY}{Tag~0CBY}]{stacks-project}, we can find 
\begin{enumerate}[(i)]
    \item an affine \'etale neighborhood $\Spec B_i\to \sS$ of $p_i$,
    \item an affine \'etale neighborhood $\Spec A_i\to\Delta$ of $s$,
    \item an integer $m_i\ge 1$, and an \'etale map 
    \begin{align*}
        \frac{A_i[x_i,y_i]}{(x_iy_i-t^{m_i})}\to B_i   
    \end{align*}
    of $k$-algebras,
\end{enumerate}
such that the induced map $A_i\to B_i$ (or equivalently, $\Spec B_i\to\Spec A_i$) is compatible with $\pi$. Here, $x_i$ and $y_i$ are variables generating the free polynomial $A_i$-algebra $A_i[x_i,y_i]$. 

Therefore, we have a commutative diagram
\begin{equation}\label{eqn:local-model-interface-nodes}
\begin{tikzcd}
    \sS \arrow[d,"\pi"] & \arrow[l,"\text{\'etale}"'] \Spec B_i \arrow[d] \arrow[r,"\text{\'etale}"] &\Spec \displaystyle\frac{k[x_i,y_i,t]}{(x_iy_i-t^{m_i})} \arrow[d] \\
    \Delta & \arrow[l,"\text{\'etale}"'] \Spec A_i \arrow[r,"\text{\'etale}"] & \Spec k[t]
\end{tikzcd}
\end{equation}
where the horizontal arrows are \'etale. From the description of the versal deformation of a node \cite[Theorem~14.1]{Har-DT}, we find that $m_i$ is intrinsically characterized as the largest integer for which the restriction of $\sS\to\Delta$ to $\Spec({\clO_{\Delta,s}}/{(t^{m_i})})$ induces a trivial deformation of the node at $p_i$. By switching the variables if needed, we assume that the image of $x_i,y_i$ in $B_i/{(t)}$ corresponds to local coordinates at $p_i$ on $\sE,C$, respectively.

We have a description similar to \eqref{eqn:local-model-interface-nodes} at all the other nodes of $\sC$ as well. Thus, the surface $\sS$ is non-singular except at those nodes of $\sC$ where $\sS$ is modeled on the $A_l$-singularity $xy=t^{l+1}$ for some integer $l\ge 1$. We now recall a basic fact, illustrated in Figure~\ref{fig:resol-Al-sing}.
\begin{Lemma}[Resolution of $A_l$-singularity]\label{lemma:resol-Al-sing}
    Let $l\ge 1$ be an integer. Repeatedly blowing up the singular points of the surface
    \begin{align*}
        \Spec\frac{ k[x,y,t]}{(xy-t^{l+1})}    
    \end{align*}
    results in a non-singular surface, which can be described globally as the vanishing locus
    \begin{align}\label{eqn:resol-Al-sing}
        V(xu_1-tv_1,v_1u_2-tv_2u_1,\ldots,v_{l-1}u_l-tv_lu_{l-1},u_lt-v_ly)
    \end{align}
    inside $\Spec k[x,y,t]\times\prod_{j=1}^l\Proj k[u_j,v_j]$.
    In particular, the exceptional locus above the point $(x,y,t)$ is a chain of $l$ copies of $\bP^1$, each of self-intersection $-2$. One end of the chain meets the strict transform of the curve defined by $(x,t)$ while the other end meets the strict transform of the curve defined by $(y,t)$.
\end{Lemma}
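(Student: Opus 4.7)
The plan is to proceed by induction on $l$, carrying out the computation in the three standard affine charts of $\mathrm{Bl}_0\mathbb{A}^3$ and iterating on the residual singular point.

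For the base case $l=1$, I would compute $\mathrm{Bl}_0 V(xy-t^2)$ directly: in the $x$-chart (with $y=x\eta$, $t=x\tau$) the strict transform is the graph $\eta=\tau^2$, in the $y$-chart it is symmetric, and in the $t$-chart (with $x=t\xi$, $y=t\eta$) the strict transform is the hyperbola $\xi\eta=1$, which misses the exceptional locus. Gluing gives one smooth exceptional $\mathbb{P}^1$. For the inductive step on $V(xy-t^{l+1})$ with $l\ge 2$, the same three charts give smoothness in the $x$- and $y$-charts (strict transform $\eta=x^{l-1}\tau^{l+1}$, and its $y$-analogue) and an $A_{l-2}$-singularity $\xi\eta=t^{l-1}$ in the $t$-chart (smooth when $l=2$). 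The reduced exceptional divisor of this first blow-up is the projectivized tangent cone $V(xy)\subset\mathbb{P}^2_{[x:y:t]}$, a union of two $\mathbb{P}^1$'s meeting transversally at the new singular point. Iterating via the induction hypothesis yields the full $A_l$-chain of $l$ exceptional $\mathbb{P}^1$'s. Self-intersection $-2$ for each of these curves is best extracted cleanly from the fact that $V(xy-t^{l+1})$ is a rational double point, so the constructed resolution is crepant (that is, $K_{\tilde X}=\rho^*K_X$), and adjunction on each exceptional $\mathbb{P}^1$ then forces $E_i\cdot E_i=-2$.

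For the global description as a subscheme of $\Spec k[x,y,t]\times\prod_{j=1}^l\Proj k[u_j,v_j]$, the idea is that each blow-up introduces one $\mathbb{P}^1$-factor whose homogeneous coordinates record a ratio that becomes visible only after that blow-up. The first blow-up contributes the relation $xu_1=tv_1$, so that $[u_1:v_1]$ records $[t:x]$; each subsequent blow-up of the residual singular point in the $t$-chart contributes a relation of the form $v_{j-1}u_j=tv_ju_{j-1}$ linking the new ratio to the previous one; and the final relation $u_lt=v_ly$ comes from the last blow-up on the $y$-side of the chain. Assembling these $l+1$ incidence relations produces exactly the claimed subscheme, and the projection to $\Spec k[x,y,t]$ is clearly the iterated blow-up. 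Finally, the strict transforms of $V(x,t)$ and $V(y,t)$ are the $y$- and $x$-axes on the surface; since every blow-up after the first takes place at a singular point inside the $t$-chart, which does not meet either axis, these strict transforms are perturbed only by the initial blow-up and end up meeting the two end curves of the chain transversally at a single point each.

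The main obstacle is the bookkeeping: tracking strict transforms across iterated blow-ups, matching local chart coordinates consistently with the global homogeneous coordinates $[u_j:v_j]$, and verifying the intersection-theoretic claims. There are no genuinely new ideas; everything reduces to the base case together with careful patching.
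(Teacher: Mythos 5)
Your proposal is correct and follows essentially the same route as the paper: induction on $l$, computing the blow-up of $xy=t^{l+1}$ in the three standard charts and identifying the residual $A_{l-2}$-singularity $\xi\eta=t^{l-1}$ in the $t$-chart; the extra items you supply (crepancy plus adjunction for $E_i\cdot E_i=-2$, assembling the incidence relations $xu_1=tv_1$, $v_{j-1}u_j=tv_ju_{j-1}$, $u_lt=v_ly$ for the global model, and the behavior of the strict transforms of $V(x,t)$ and $V(y,t)$) are standard verifications filling in details the paper's own proof leaves implicit. One harmless slip: in the base case the strict transform $\xi\eta=1$ in the $t$-chart does \emph{not} miss the exceptional locus --- it meets the exceptional plane $t=0$ in the part of the exceptional conic away from the two coordinate points --- but this does not affect your conclusions, since smoothness and the identification of the exceptional curve go through regardless.
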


\begin{figure}[h]
\centering
\includegraphics[width=12cm]{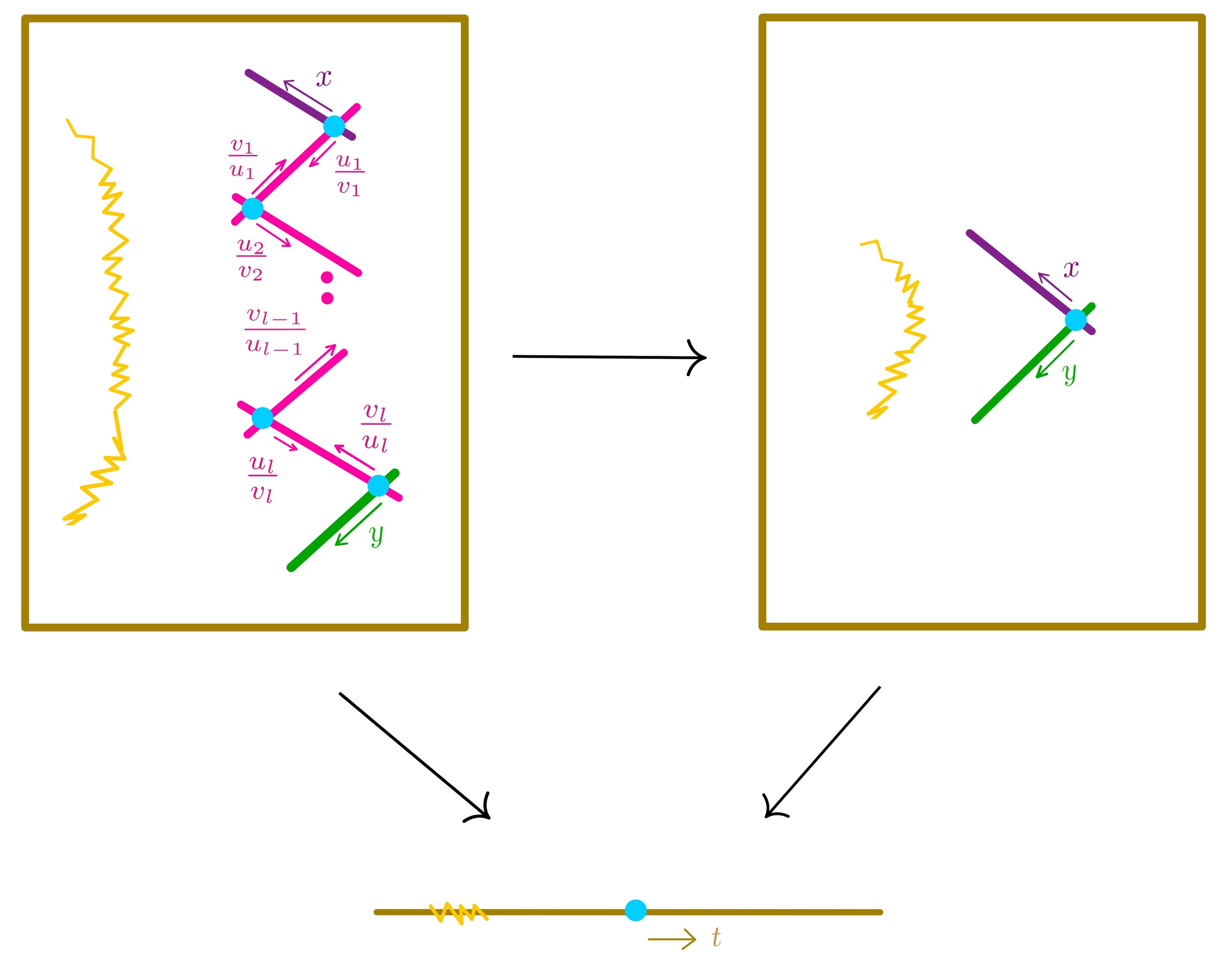}
\caption{Resolving the $A_l$-singularity $xy=t^{l+1}$ by repeatedly blowing up singular points. At each of the $l+1$ nodes of the $t = 0$ fibre of the resolved surface, the labels specify a pair of local coordinates whose product is $t$.}
\label{fig:resol-Al-sing}
\end{figure}

\begin{proof}
    This follows from a direct computation.
    %
\end{proof}

Since blow-ups can be computed \'etale locally \cite[\href{https://stacks.math.columbia.edu/tag/085S}{Tag~085S}]{stacks-project}, the local models \eqref{eqn:local-model-interface-nodes} and Lemma~\ref{lemma:resol-Al-sing} allow us to resolve the singularities of $\sS$ by repeatedly blowing them up.
Let $\rho:\tilde\sS\to\sS$ be the resolution of the surface $\sS$ obtained by this procedure; see Figure~\ref{fig:resol-smoothing-family}.
For $1\le i\le n$, let $\tilde\sS_{B_i}$ be the resolution of $\Spec B_i$ obtained by repeatedly blowing up the unique singular point.
The diagram \eqref{eqn:local-model-interface-nodes} then extends to a commutative diagram
    \begin{equation}\label{eqn:local-model-resol-interface-nodes}
    \begin{tikzcd}
        \tilde\sS \arrow[d,"\rho"] & \arrow[l,"\text{\'etale}"'] \tilde\sS_{B_i} \arrow[dl,phantom,"\llcorner",pos=0.1] \arrow[dr,phantom,"\lrcorner",pos=0.08] \arrow[rr,"\text{\'etale}"] \arrow[d] & & V(x_iu_{i,1}-tv_{i,1},\ldots,u_{i,m_i-1}t-v_{i,m_i-1}y_i) \arrow[d] \\
        \sS \arrow[d,"\pi"] & \arrow[l,"\text{\'etale}"'] \Spec B_i \arrow[d] \arrow[rr,"\text{\'etale}"] & \phantom{} & \displaystyle\Spec\frac{ {k[x_i,y_i,t]}}{{(x_iy_i-t^{m_i})}} \arrow[d] \\
    \Delta & \arrow[l,"\text{\'etale}"'] \Spec A_i \arrow[rr,"\text{\'etale}"] & & \Spec k[t]
    \end{tikzcd}
    \end{equation}
    where the horizontal arrows are \'etale, the top two squares are pullbacks and the top right position has the subscheme of $\Spec k[x_i,y_i,t]\times\prod_{j=1}^{m_i-1}\Proj k[u_{i,j},v_{i,j}]$ analogous to \eqref{eqn:resol-Al-sing}.

\begin{figure}[h]
\centering
\includegraphics[width=11cm]{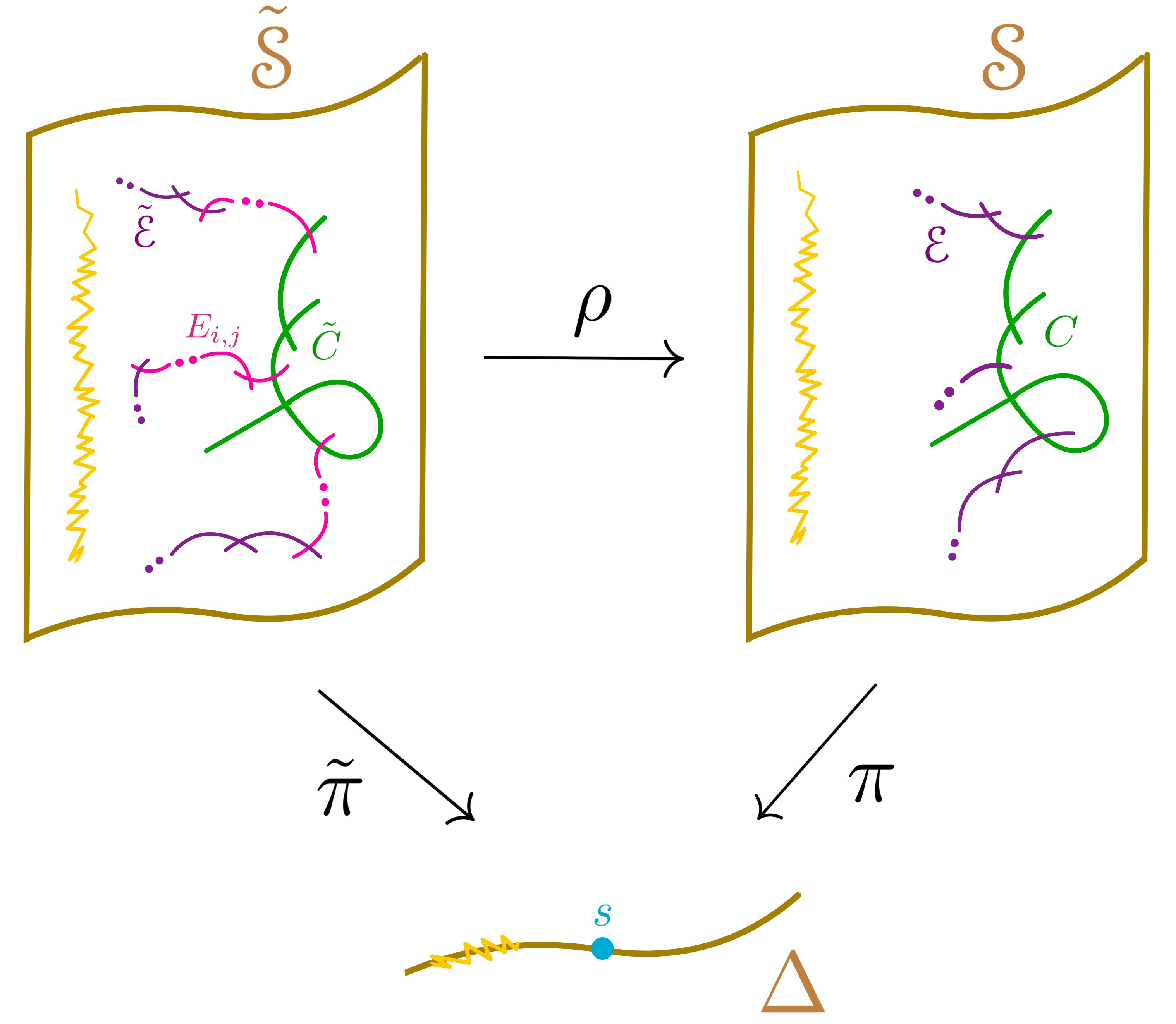}
\caption{Resolving the singularities of $\sS$ by repeatedly blowing up the singular points to get $\tilde\sS$, with exceptional curves $E_{i,j}$.}
\label{fig:resol-smoothing-family}
\end{figure}

\begin{notation}[Coordinates on $\tilde\sS$]\label{nota:coordinates}    
    Define $\tilde\pi:\tilde \sS\to\Delta$ and $\tilde F:\tilde\sS\to X$ to be the morphisms induced by the morphisms $\pi$ and $\rho$ respectively.
    Write $\tilde\sC = \tilde\pi^{-1}(s)$. Define
    \begin{align*}
        \tilde C = \overline{\rho^{-1}(C\setminus\{p_1,\ldots,p_n\})}\quad\text{and}\quad\tilde\sE = \overline{\rho^{-1}(\sE\setminus\{p_1,\ldots,p_n\})}.
    \end{align*}
    Sequentially number the copies of $\bP^1$ in the exceptional locus over $p_i$ as $E_{i,1},\ldots,E_{i,m_i-1}$, with the convention that $E_{i,1}$ meets $\tilde\sE$ while $E_{i,m_i-1}$ meets $\tilde C$. 
    For $1\le i\le n$ and $1\le j\le m_i$, write $\{p_{i,j}\} = E_{i,j-1}\cap E_{i,j}$ (see Figure~\ref{fig:Ghost}), where we set $E_{i,0} = \tilde\sE$ and $E_{i,m_i} = \tilde C$ for convenience. For the purpose of coordinate computations, on each $\tilde\sS_{B_i}$, use \eqref{eqn:local-model-resol-interface-nodes} to define the rational functions $x_{i,1} = x_i$, $y_{i,m_i} = y_i$, and
    \begin{align*}
        x_{i,j+1} = \frac{v_{i,j}}{u_{i,j}}\quad\text{and}\quad y_{i,j} = \frac{u_{i,j}}{v_{i,j}}, 
    \end{align*}
    for $1\le j\le m_i-1$. Note that $x_{i,j}$ and $y_{i,j}$ provide local coordinates at (the inverse image of) $p_{i,j}$ for (the inverse images of) $E_{i,j-1}$ and $E_{i,j}$ respectively and satisfy the relation $x_{i,j}y_{i,j} = t$ on $\tilde\sS_{B_i}$.
\end{notation}

The Cartier divisor $s$ on $\Delta$ pulls back under $\tilde\pi$ to the Cartier divisor $\tilde\sC$ on $\tilde\sS$ and so, we have a canonical identification of the associated locally free sheaves
\begin{align}\label{eqn:pullback-line-bundle}
    \tilde\pi^*(\clO_\Delta(s)) \xrightarrow{\simeq} \clO_{\tilde\sS}(\tilde\sC).
\end{align}
The following elementary observation will be very useful.

\begin{Lemma}\label{lemma:smoothing-parameter-iso}
    The family $\pi:\sS\to\Delta$ determines a canonical linear isomorphism
    \begin{align}\label{eqn:smoothing-parameter-iso}
        \Phi_{p_i}:T_{\Delta,s}^{\otimes m_i}\xrightarrow{\simeq} T_{\sE,p_i}\otimes T_{C,p_i},
    \end{align}
    which is given by $\partial_t^{\otimes m_i}\mapsto\partial_{x_i}\otimes\partial_{y_i}$ in any choice of local model as in \eqref{eqn:local-model-interface-nodes}.
\end{Lemma}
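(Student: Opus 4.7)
The plan is to show that the recipe $\partial_t^{\otimes m_i}\mapsto \partial_{x_i}\otimes \partial_{y_i}$ is independent of the chosen local model \eqref{eqn:local-model-interface-nodes} (with $t$ fixed). Once this is established, $\Phi_{p_i}$ is automatically an isomorphism, since both source and target are one-dimensional and the map sends a generator to a generator.

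First I would reduce to a coordinate-change statement. Passing to a common \'etale refinement (equivalently, working in $\widehat{\clO_{\sS,p_i}}$), two local models produce two coordinate pairs $(x_i,y_i)$ and $(x_i',y_i')$ in the same local ring, both satisfying $x_iy_i=x_i'y_i'=t^{m_i}$. Since the ideals of $C$ and $\sE$ at $p_i$ satisfy $(x_i,t)=(x_i',t)$ and $(y_i,t)=(y_i',t)$, I may write $x_i' = Ax_i+Bt$ and $y_i'=Cy_i+Dt$ with $\alpha:=A(p_i),\beta:=C(p_i)\in k^{\times}$ the leading coefficients of $x_i'|_\sE$ and $y_i'|_C$ relative to $x_i,y_i$. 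A chain-rule computation gives $\partial_{x_i'}\otimes \partial_{y_i'} = (\alpha\beta)^{-1}\partial_{x_i}\otimes \partial_{y_i}$, so the lemma reduces to showing $\alpha\beta=1$.

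To prove $\alpha\beta=1$ I would pass to the resolution $\rho:\tilde\sS\to\sS$. By functoriality of iterated blow-up of singular points, the automorphism $\sigma$ of $\widehat{\clO_{\sS,p_i}}$ encoding the change of local model lifts to an automorphism $\tilde\sigma$ of the formal completion of $\tilde\sS$ along $\rho^{-1}(p_i)$. Preservation of the ideals of $\sE$ and $C$ forces $\tilde\sigma$ to preserve the strict transforms $\tilde\sE$ and $\tilde C$, and hence to fix the endpoint nodes $p_{i,1}=\tilde\sE\cap E_{i,1}$ and $p_{i,m_i}=E_{i,m_i-1}\cap \tilde C$. Since the chain $E_{i,1},\ldots,E_{i,m_i-1}$ is linearly ordered with both endpoints fixed, $\tilde\sigma$ must fix every $E_{i,j}$ and every node $p_{i,j}$. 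At each $p_{i,j}$ the local model on $\tilde\sS$ is $x_{i,j}y_{i,j}=t$, i.e.\ the $m=1$ case of the lemma, so matching leading terms in $\tilde\sigma(x_{i,j})\tilde\sigma(y_{i,j})=x_{i,j}y_{i,j}$ immediately gives $\alpha_j\beta_j=1$, where $\alpha_j,\beta_j$ denote the linear coefficients of $\tilde\sigma(x_{i,j}),\tilde\sigma(y_{i,j})$. On each $E_{i,j}\cong \bP^1$, $\tilde\sigma$ restricts to an automorphism fixing two points, hence a scaling; the relation $y_{i,j}x_{i,j+1}=1$ then forces $\beta_j=\alpha_{j+1}^{-1}$, which combined with $\alpha_j\beta_j=1$ gives $\alpha_{j+1}=\alpha_j$. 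Telescoping, together with the identifications $T_{\tilde\sE,p_{i,1}}=T_{\sE,p_i}$ and $T_{\tilde C,p_{i,m_i}}=T_{C,p_i}$, yields $\alpha=\alpha_1$ and $\beta=\beta_{m_i}=\alpha_1^{-1}$, so $\alpha\beta=1$. The main obstacle I anticipate is cleanly setting up the lift $\tilde\sigma$ and its preservation of the chain; this reduces to functoriality of iterated blow-up of $A_{m_i-1}$ singularities together with the invariance of the strict transforms of $\sE$ and $C$.
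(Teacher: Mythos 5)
Your proposal is correct, but it proceeds in the opposite direction from the paper's argument. The paper first builds $\Phi_{p_i}$ intrinsically: the identification $\tilde\pi^*\clO_\Delta(s)\simeq\clO_{\tilde\sS}(\tilde\sC)$ of \eqref{eqn:pullback-line-bundle}, restricted to each node $p_{i,j}$ of the resolved central fibre, yields isomorphisms $T_{\Delta,s}\simeq T_{E_{i,j-1},p_{i,j}}\otimes T_{E_{i,j},p_{i,j}}$, and the intermediate exceptional factors are cancelled using the canonical isomorphism $T_{E,p}\simeq T_{E,q}^\vee$ for a genus $0$ curve (Lemma~\ref{lemma:P1-tgt-space}); the formula $\partial_t^{\otimes m_i}\mapsto\partial_{x_i}\otimes\partial_{y_i}$ is then a coordinate check in \eqref{eqn:local-model-resol-interface-nodes}. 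You instead take the coordinate formula as the definition and prove its independence of the local model \eqref{eqn:local-model-interface-nodes}: you encode a change of model as a $t$-fixing automorphism of $\widehat{\clO_{\sS,p_i}}$ preserving the two branches, lift it to the resolution by functoriality of blowing up singular points, note that it must fix each $E_{i,j}$ and each node $p_{i,j}$, and chase leading coefficients, getting $\alpha_j\beta_j=1$ at each node (this step implicitly uses the factorizations $\tilde\sigma^*x_{i,j}=Ux_{i,j}$, $\tilde\sigma^*y_{i,j}=Vy_{i,j}$ afforded by the branch invariance you established, which is what makes ``matching leading terms'' legitimate) and $\beta_j=\alpha_{j+1}^{-1}$ from the scaling action on $E_{i,j}\simeq\bP^1$; your use of $y_{i,j}x_{i,j+1}=1$ is exactly the trivialization of $\clO_{E_{i,j}}(p_{i,j}-p_{i,j+1})$ that underlies Lemma~\ref{lemma:P1-tgt-space}, so both arguments transport the same datum along the exceptional chain. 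What the paper's route buys is manifest canonicity (the map is defined with no reference to $t$ or to any local model, and the well-definedness issue never arises); what your route buys is a more elementary, purely coordinate-theoretic argument with no line-bundle bookkeeping, at the cost of the completion/lifting apparatus and of a construction that is a priori canonical only relative to the fixed $t$ --- which is, however, all that the later steps of the paper use.
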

\begin{proof}
    We first consider the special case where $m_i = 1$. Then, $\sS$ is smooth at $p_i$ and so, we have $\tilde\sS = \sS$ and $\tilde\sC = \sC$ in a neighborhood of $p_i$. Clearly, $\pi^*(\clO_\Delta(s))|_{p_i} = T_{\Delta,s}$. The divisor $\sC\subset\sS$ is given by the sum of the divisors $\sE$ and $C$ in a neighborhood of $p_i$ and so,
    \begin{align*}
        \clO_{\sS}(\sC)|_{p_i} = \clO_{\sS}(C+\sE)|_{p_i} = \clO_{\sS}(C)|_{p_i}\otimes\clO_{\sS}(\sE)|_{p_i} \xrightarrow{\simeq} T_{\sE,p_i}\otimes T_{C,p_i}.
    \end{align*}
    Here, we have used $\clO_{\sS}(C)|_C = N_{C/\sS}$ and the isomorphism $N_{C/\sS}|_{p_i} \simeq T_{\sE,p_i}$ which follows from the fact that $\sE,C$ meet transversally at $p_i$ and the analogous facts for $\clO_{\sS}(\sE)|_\sE$ and $N_{\sE/\sS}|_{p_i}$. This computation, together with the restriction of \eqref{eqn:pullback-line-bundle} to $p_i$, defines $\Phi_{p_i}$.

    We now treat the general case. As in the previous paragraph, we obtain isomorphisms
    \begin{align}\label{eqn:intermediate-smoothing-parameter-iso}
        \Phi_{p_{i,j}}:T_{\Delta,s}\xrightarrow{\simeq} T_{E_{i,j-1},p_{i,j}}\otimes T_{E_{i,j},p_{i,j}}
    \end{align}
    by restricting \eqref{eqn:pullback-line-bundle} to $p_{i,j}$ for $1\le j\le m_i$. The tensor product $\bigotimes_{1\le j\le m_i}\Phi_{p_{i,j}}$, together with Lemma~\ref{lemma:P1-tgt-space} below, defines \eqref{eqn:smoothing-parameter-iso}. We check, using \eqref{eqn:local-model-resol-interface-nodes}, that $\Phi_{p_{i,j}}$ is given in coordinates by $\partial_t\mapsto\partial_{x_{i,j}}\otimes\partial_{y_{i,j}}$, which implies that $\Phi_{p_i}$ is given in coordinates by $\partial_t^{\otimes m_i}\mapsto\partial_{x_i}\otimes\partial_{y_i}$.
\end{proof}

\begin{Lemma}\label{lemma:P1-tgt-space}
    Let $E$ be a non-singular projective curve of genus $0$, and let $p,q\in E$ be two distinct closed points. Then the line bundle $\sL = \clO_E(p-q)$ is trivial. Moreover, evaluating the global sections of $\sL$ at $p$ and $q$ yields isomorphisms $H^0(E,\sL)\simeq\sL|_p = T_{E,p}$ and $H^0(E,\sL)\simeq\sL|_q = T_{E,q}^\vee$. In particular, there is a canonical isomorphism $T_{E,p}\simeq T_{E,q}^\vee$.
\end{Lemma}
\begin{proof}
    This is clear once we choose an isomorphism $E\simeq\bP^1$. 
\end{proof}

\subsection{Rational function from smoothing family}

After suitably renumbering the $p_i$, we assume that we have $m := m_1 = \ldots = m_r < m_{r+1}\le\cdots\le m_n$ for some integer $1\le r\le n$. The next proposition is the crux of the proof of Theorem~\ref{thm:first-order-obstruction}.

\begin{Proposition}[Leading term of smoothing family]\label{prop:simple-pole-function}
   Consider the locally free sheaf
    \begin{align*}
        \fF = \clO_C(p_1 + \cdots + p_r)\otimes \Hom(T_{\Delta,s}^{\otimes m},T_{X,q}).
    \end{align*}
    It has a section $\sigma\in H^0(C,\fF)$ with the following property. For $1\le i\le r$, we have
    \begin{align*}
        \sigma(p_i) = \left(\normalfont\text{id}_{T_{C,p_i}}\otimes d(f|_\sE)_{p_i}\right) \circ \Phi_{p_i},
    \end{align*}
    under the identification $\fF|_{p_i} = T_{C,p_i}\otimes \Hom(T_{\Delta,s}^{\otimes m},T_{X,q})= \Hom(T_{\Delta,s}^{\otimes m},T_{C,p_i}\otimes T_{X,q})$.
\end{Proposition}

The proof of Proposition~\ref{prop:simple-pole-function} is discussed in the next subsection. The following corollary shows how to deduce Theorem~\ref{thm:first-order-obstruction} from this proposition.

\begin{Corollary}
    The composite linear map
    \begin{align*}
        T_{\Delta,s}^{\otimes m}\xrightarrow{(\Phi_{p_1},\ldots,\Phi_{p_r})}\bigoplus_{1\le i\le r}T_{C,p_i}\otimes T_{\sE,p_i}\hookrightarrow\bigoplus_{1\le i\le n}T_{C,p_i}\otimes T_{\sE,p_i}\xrightarrow{\eqref{eqn:first-order-obstruction-map}} H^1(C,\clO_C)\otimes T_{X,q}
    \end{align*}
    is zero. In particular, the linear map \eqref{eqn:first-order-obstruction-map} has a non-trivial kernel.
\end{Corollary}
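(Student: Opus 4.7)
The plan is to interpret the statement as the vanishing of the connecting map applied to the residue of the section $\sigma$ provided by Proposition~\ref{prop:simple-pole-function}. First I would write $W = \Hom(T_{\Delta,s}^{\otimes m},T_{X,q})$, which is a fixed $k$-vector space, and observe that $\fF = \clO_C(p_1+\cdots+p_r)\otimes_k W$. Then I would use the short exact sequence
\begin{align*}
    0\to\clO_C\to\clO_C(p_1+\cdots+p_r)\to\bigoplus_{i=1}^{r}T_{C,p_i}\to 0
\end{align*}
whose connecting map in cohomology is, by Definition~\ref{def:residue-obstruction}, the sum $\sum_{i=1}^{r}\delta_{C,p_i}$. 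Tensoring with $W$ (which is exact) gives a corresponding short exact sequence on $\fF$, whose connecting map is $\sum_{i=1}^{r}\delta_{C,p_i}\otimes\text{id}_W$.

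Next, I would apply the standard fact that the image of a global section under the connecting map is zero. Concretely, the image of $\sigma\in H^0(C,\fF)$ in $\bigoplus_{i=1}^r T_{C,p_i}\otimes W$ is the tuple of residues $(\sigma(p_1),\ldots,\sigma(p_r))$, and this tuple lies in the kernel of $\sum_{i=1}^{r}\delta_{C,p_i}\otimes\text{id}_W$. Substituting the formula $\sigma(p_i) = (\text{id}_{T_{C,p_i}}\otimes d(f|_\sE)_{p_i})\circ\Phi_{p_i}$ from Proposition~\ref{prop:simple-pole-function} and using the identification $H^1(C,\clO_C)\otimes W = \Hom(T_{\Delta,s}^{\otimes m},H^1(C,\clO_C)\otimes T_{X,q})$, this vanishing reads precisely as
\begin{align*}
    \sum_{i=1}^{r}(\delta_{C,p_i}\otimes d(f|_\sE)_{p_i})\circ\Phi_{p_i} = 0
\end{align*}
as a linear map $T_{\Delta,s}^{\otimes m}\to H^1(C,\clO_C)\otimes T_{X,q}$. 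Extending the sum from $1\le i\le r$ to $1\le i\le n$ by zeros gives exactly the composite of the corollary.

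Finally, I would verify that this composite is not zero for trivial reasons by exhibiting a nonzero image in $\bigoplus_{i}T_{C,p_i}\otimes T_{\sE,p_i}$. Since each $\Phi_{p_i}$ is an isomorphism by Lemma~\ref{lemma:smoothing-parameter-iso} and $T_{\Delta,s}^{\otimes m}$ is one-dimensional, the map $(\Phi_{p_1},\ldots,\Phi_{p_r})$ is injective; in particular a generator $\partial_t^{\otimes m}$ maps to a nonzero element of $\bigoplus_{1\le i\le n}T_{C,p_i}\otimes T_{\sE,p_i}$ lying in the kernel of \eqref{eqn:first-order-obstruction-map}. There is no real obstacle here: the whole argument is a bookkeeping exercise once Proposition~\ref{prop:simple-pole-function} and the description of the connecting map are in hand, and the only point requiring care is keeping track of how the Hom-space $W$ interacts with the tensor products so that the abstract identity unfolds into the concrete composite named in the corollary.
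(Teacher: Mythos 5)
Your proposal is correct and follows essentially the same route as the paper: both use the exactness of $H^0(C,\fF)\to\bigoplus_{i\le r}\fF|_{p_i}\to H^1(C,\clO_C)\otimes W$ coming from the sequence $0\to\clO_C\otimes W\to\fF\to\bigoplus_i\fF|_{p_i}\to 0$, identify the connecting map with $\bigoplus_i\delta_{C,p_i}\otimes\mathrm{id}_W$, and substitute the values $\sigma(p_i)$ from Proposition~\ref{prop:simple-pole-function}. Your explicit remark that injectivity of $(\Phi_{p_1},\ldots,\Phi_{p_r})$ yields the non-trivial kernel element is a minor addition the paper leaves implicit, but it is the same argument.
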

\begin{proof}
    Write $W := \Hom(T_{\Delta,s}^{\otimes m},T_{X,q})$, and consider the short exact sequence
    \begin{align*}
        0\to\fF(-p_1-\cdots-p_r)\to\fF\to\bigoplus_{1\le i\le r}\fF|_{p_i}\to 0.
    \end{align*}
    The corresponding cohomology exact sequence contains
    \begin{align}\label{eqn:cohomology-les}
        H^0(C,\fF)\to\bigoplus_{1\le i\le r} T_{C,p_i}\otimes W\to H^1(C,\clO_C)\otimes W. 
    \end{align}
    The first map in \eqref{eqn:cohomology-les} is given by evaluating sections at $p_1,\ldots,p_r$, while the second map is given by $\bigoplus_{1\le i\le r}\delta_{C,p_i}\otimes\text{id}_W$.
    Let $\sigma\in H^0(C,\fF)$ be as in Proposition~\ref{prop:simple-pole-function}. 
    
    From \eqref{eqn:cohomology-les}, we deduce that the image of $\sigma$ in $H^1(C,\clO_C)\otimes W$ vanishes. The desired claim now follows from the explicit expressions for $\sigma(p_1),\ldots,\sigma(p_r)$ given in Proposition~\ref{prop:simple-pole-function}.
\end{proof}

\subsection{Key computation in coordinates}

The proof of Proposition~\ref{prop:simple-pole-function} uses an explicit coordinate computation carried out in Lemma~\ref{lemma:main-coord-computation} below.

\begin{notation}[Ghost curves in $\tilde\sS$]\label{nota:ghost-curves-in-S}
    For each integer $1\le l\le m$, define the curve
    \begin{align*}
        \tilde C_l = \tilde C\cup \bigcup_{i=1}^n \bigcup_{j=l}^{m_i-1} E_{i,j},
    \end{align*}
    and note that $p_{1,l},\ldots,p_{n,l}\in\tilde C_l$ (see Figure~\ref{fig:Ghost}). Moreover, we have 
    \begin{align*}
        \tilde C\cap\{p_{1,m},\ldots,p_{n,m}\} = \{p_{1,m},\ldots,p_{r,m}\}.
    \end{align*}
\end{notation}

\begin{notation}[Local coordinates on $X$ near $q$]
    Lift the identity map of $T_{X,q}^\vee$, which we regard as an element of
    \begin{align*}
        \Hom(T_{X,q}^\vee,T_{X,q}^\vee) = T_{X,q}\otimes T_{X,q}^\vee = T_{X,q}\otimes(\fm_{X,q}/\fm_{X.q}^2),    
    \end{align*}
    to an element of $T_{X,q}\otimes\fm_{X,q}$. This yields a rational function $\varphi:X\dashrightarrow T_{X,q}$. Note that $\varphi$ defines a regular function on an open neighborhood $U$ of $q$, and satisfies $\varphi(q) = 0$ and $d\varphi_q = \text{id}_{T_{X,q}}$. From this, we get the rational function
    \begin{align*}
        G = \varphi\circ \tilde F:\tilde\sS\dashrightarrow T_{X,q},    
    \end{align*}
    which defines regular function on the open neighborhood $\tilde F^{-1}(U)$ of $\tilde C_1$.
\end{notation}

\begin{figure}[h]
\centering
\includegraphics[width=5.4cm]{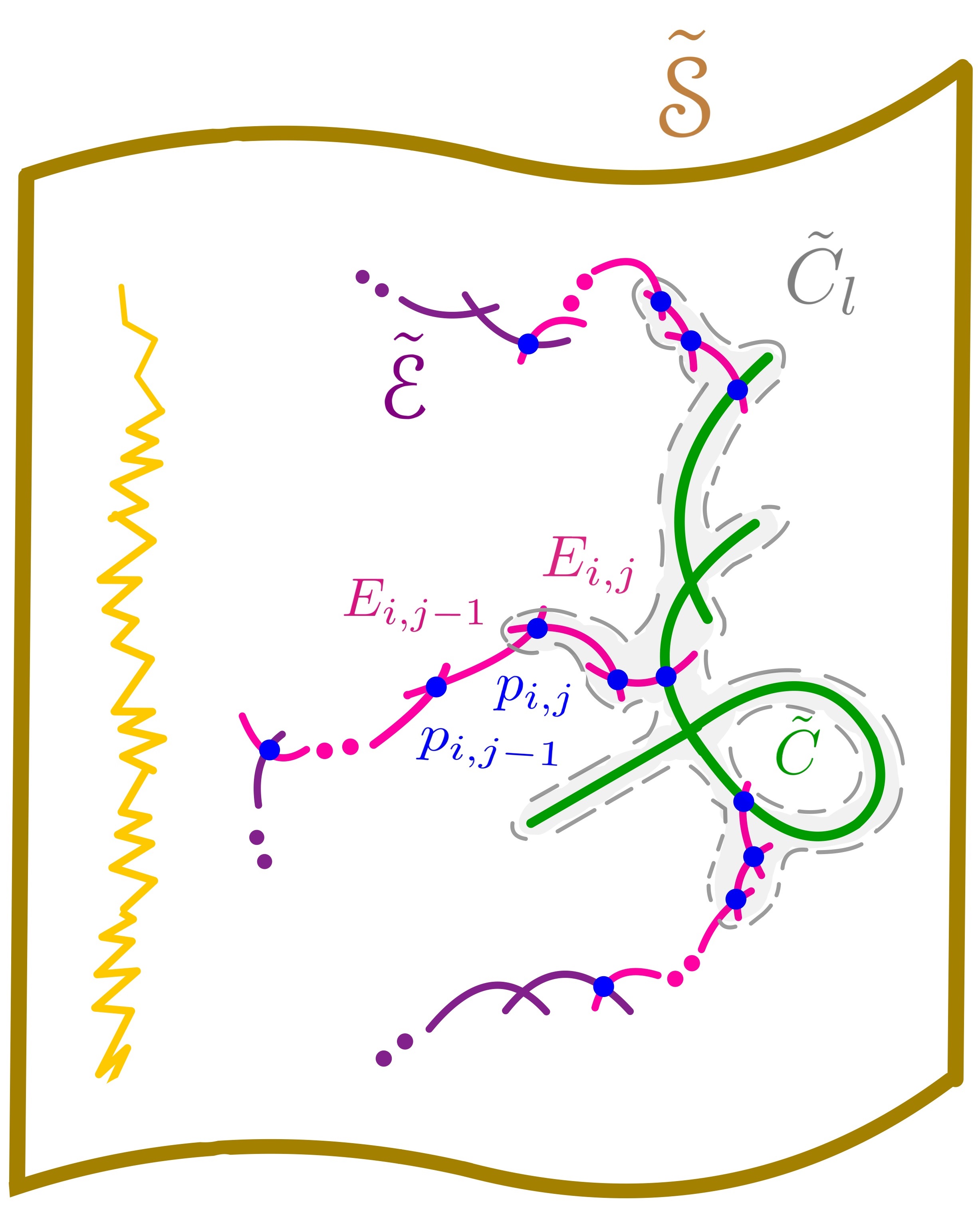}
\caption{A ghost curve $\tilde C_{l}$ in $\tilde \sS$ (Notation~\ref{nota:ghost-curves-in-S}) which is depicted as the union of the curves in the gray region. In this figure, we have $l=m-2$.}
\label{fig:Ghost}
\end{figure}

\begin{Lemma}\label{lemma:main-coord-computation}
    For each integer $1\le l\le m$, we have a rational function $G_l:\tilde\sS\dashrightarrow T_{X,q}$ and constant vectors $a_0,\ldots,a_{l-1}\in T_{X,q}$ such that
   \begin{align*}
        G = a_0 + a_1t + \cdots + a_{l-1}t^{l-1} + t^lG_l.
    \end{align*}
   The restriction of $G_l$ to $\tilde C_l$ is regular, except possibly at $p_{1,l},\ldots,p_{n,l}$. For $1\le i\le n$, the function $G_l|_{\tilde C_l}$ has at worst a simple pole at $p_{i,l}$, with residue given by
    \begin{align*}
        \partial_{y_{i,l}}\otimes\left.\frac{\partial G}{\partial {x_{i,1}}}\right\vert_{p_{i,1}}\in T_{E_{i,l},p_{i,l}}\otimes T_{X,q}.
    \end{align*}
\end{Lemma}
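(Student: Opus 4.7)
The plan is to induct on $l \geq 1$, producing $a_{l-1}$ and $G_l$ at each stage and verifying the claims by explicit local computations at each relevant node of $\tilde\sS$, using the coordinates $(x_{i,j},y_{i,j})$ with $x_{i,j}y_{i,j}=t$ introduced in Notation~\ref{nota:coordinates}.

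For the base case $l=1$: since $F$ contracts $C$ to $q$ and $\rho$ contracts each exceptional $E_{i,j}$ to $p_i$ (which $F$ also sends to $q$), we have $\tilde F(\tilde C_1) = \{q\}$, so $G|_{\tilde C_1} = \varphi(q) = 0$. Setting $a_0 := 0$ and $G_1 := G/t$, I would examine $G_1$ near each $p_{i,1}$ in coordinates where $\tilde\sE = V(y_{i,1})$, $E_{i,1} = V(x_{i,1})$: vanishing of $G$ on $E_{i,1}$ gives $G = x_{i,1}\gamma$ for some regular germ $\gamma$, and the chain rule (using $d\varphi_q = \text{id}_{T_{X,q}}$) identifies $\gamma(p_{i,1})$ with $\partial G/\partial x_{i,1}|_{p_{i,1}}$; then $G_1 = \gamma/y_{i,1}$ exhibits the claimed simple pole and residue on $E_{i,1}$.

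For the inductive step $l \to l+1$: the poles of $G_l|_{\tilde C_l}$ all lie at $p_{i,l} \in E_{i,l-1} \cup E_{i,l}$, neither of which is a component of $\tilde C_{l+1}$, so $G_l|_{\tilde C_{l+1}}$ is regular. Since $\tilde C_{l+1}$ is a connected projective nodal curve whose components are either $\tilde C$ (projective) or $\bP^1$'s attached in chains, every regular $T_{X,q}$-valued function on it is constant; I would define $a_l \in T_{X,q}$ to be this constant, and set $G_{l+1} := (G_l - a_l)/t$. Regularity of $G_{l+1}$ at interior nodes of $\tilde C_{l+1}$ (those with both adjacent components in $\tilde C_{l+1}$) follows from $(x_{i,j})\cap(y_{i,j}) = (t)$. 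The key computation is at $p_{i,l+1}$: by the inductive hypothesis, $G_l|_{E_{i,l}} = v_i/y_{i,l} + a_l$ with $v_i := \partial G/\partial x_{i,1}|_{p_{i,1}}$; rewriting using $y_{i,l} = x_{i,l+1}^{-1}$ on $E_{i,l}$ (Lemma~\ref{lemma:P1-tgt-space}) translates the inductive residue datum into the boundary condition $(G_l-a_l)|_{V(y_{i,l+1})} = v_i\, x_{i,l+1}$, while $(G_l-a_l)|_{V(x_{i,l+1})} = 0$ holds because $E_{i,l+1} \subset \tilde C_{l+1}$. These two boundary conditions force $G_l - a_l = v_i x_{i,l+1} + x_{i,l+1}y_{i,l+1}B$ for some regular $B$, so dividing by $t = x_{i,l+1}y_{i,l+1}$ yields $G_{l+1} = v_i/y_{i,l+1} + B$, which has the claimed simple pole and residue at $p_{i,l+1}$.

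The main difficulty is the bookkeeping in this last step: tracking how the residue vector $v_i$ propagates down the chain unchanged, through each exceptional $\bP^1$ and its pair of local coordinates at opposite ends, and handling the terminal case $l+1 = m_i$ for $i \leq r$ (where the convention $E_{i,m_i}=\tilde C$ means the simple pole lands on the ghost component $\tilde C$ itself). Once the coordinate conversions via Lemma~\ref{lemma:P1-tgt-space} are executed carefully, the entire induction becomes mechanical, and the residue data at the final level $l = m$ delivers exactly the section $\sigma$ that Proposition~\ref{prop:simple-pole-function} requires.
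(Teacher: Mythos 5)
Your proposal is correct and follows essentially the same route as the paper's proof: induction on $l$, with the base case using vanishing of $G$ on $\tilde C_1$, the inductive step subtracting the constant value $a_l$ of $G_l$ on the connected projective curve $\tilde C_{l+1}$, dividing by $t$ via the local relations $x_{i,j}y_{i,j}=t$, and propagating the residue along each exceptional $\bP^1$ (your explicit formula $G_l|_{E_{i,l}} = v_i/y_{i,l}+a_l$ with $x_{i,l+1}=1/y_{i,l}$ is exactly the content of the paper's invocation of Lemma~\ref{lemma:P1-tgt-space} applied to the section of $\clO_{E_{i,l}}(p_{i,l}-p_{i,l+1})\otimes T_{X,q}$). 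The only point you leave implicit --- that $G_l$ is regular as a function on the smooth surface $\tilde\sS$ near the points of $\tilde C_{l+1}$ where you perform the local divisions, which follows by tracking that its polar divisors among the components of $\tilde\pi^{-1}(s)$ are only $\tilde\sE$ and $E_{i,j}$ with $j\le l-1$ --- is likewise implicit in the paper's argument.
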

\begin{proof}
    The proof is by induction on $l$. For \'etale local calculations using the coordinate description \eqref{eqn:local-model-resol-interface-nodes} and Notation~\ref{nota:coordinates}, we conflate $\tilde\sS_{B_i}$ with $\tilde\sS$ to improve readability.
    
    For the base case, take $a_0 = 0$ and define $G_1 = t^{-1}G$. As $F$ maps $\tilde C_1$ to $q$, the function $G$ identically vanishes on $\tilde C_1$. Since $\tilde C_1\setminus\{p_{i,1}\}_{i=1}^n\subset\tilde\sS\setminus\tilde\sE$ is defined by the equation $t = 0$, it follows that $G_1$ restricts to a regular function on $\tilde C_1\setminus\{p_{i,1}\}_{i=1}^n$. To examine the possible pole at $p_{1,i}$, we work \'etale locally using \eqref{eqn:local-model-resol-interface-nodes}. The curve $\tilde C_1\subset\tilde\sS$ is defined near $p_{1,i}$ by the equation $x_{i,1} = 0$. Thus, we have $G = x_{i,1}h$ for some $h\in\clO_{\sS,p_{i,1}}$ with 
    \begin{align*}
        h(p_{i,1}) = \left.\frac{\partial G}{\partial x_{i,1}}\right\vert_{p_{i,1}}.  
    \end{align*}
    Using $x_{i,1}y_{i,1} = t$, we get $G_1 = t^{-1}G = y_{i,1}^{-1}h$, which concludes the base case.
    
    For the induction step, let $2\le l\le m$ be given and assume that we have already found $a_0,\ldots,a_{l-2}\in T_{X,q}$ and $G_{l-1}$ as in the lemma. By assumption, the $G_{l-1}$ is regular and therefore constant on the connected curve $\tilde C_l$. Let $a_{l-1}\in T_{X,q}$ be this constant value and define $G_l = t^{-1}(G_{l-1} - a_{l-1})$. Arguing as in the previous paragraph, we see that the restriction of $G_l$ to $\tilde C_l$ is regular except for (at worst) simple poles at the points $p_{i,l}$, for $1\le i\le n$, with residues given by
    \begin{align*}
        \partial_{y_{i,l}}\otimes\left.\frac{\partial(G_{l-1}-a_{l-1})}{\partial x_{i,l}}\right\vert_{p_{i,l}}\in T_{E_{i,l},p_{i,l}}\otimes T_{X,q}.
    \end{align*}
    The restriction of $G_{l-1}-a_{l-1}$ to the non-singular genus $0$ curve $E_{i,l-1}$ defines a section of $\clO_{E_{i,l-1}}(p_{i,l-1}-p_{i,l})\otimes T_{X,q}$.  By the inductive hypothesis, $(G_{l-1}-a_{l-1})|_{E_{i,l-1}}$ has residue
    \begin{align*}
        \partial_{y_{i,l-1}}\otimes\left.\frac{\partial G}{\partial x_{i,1}}\right\vert_{p_{i,1}}\in T_{E_{i,l-1},p_{i,l-1}}\otimes T_{X,q}
    \end{align*}
    at $p_{i,l-1}$. Note that the isomorphism in Lemma~\ref{lemma:P1-tgt-space} for $E_{i,l-1}$ identifies the derivative of $(G_{l-1}-a_{l-1})|_{E_{i,l-1}}$ at $p_{i,l}$ with the residue of $(G_{l-1}-a_{l-1})|_{E_{i,l-1}}$ at $p_{i,l-1}$. Since this isomorphism maps $dx_{i,l}\in T_{E_{i,l-1},p_{i,l}}^\vee$ to $\partial_{y_{i,l-1}}\in T_{E_{i,l-1},p_{i,l-1}} 
    $, we get
    \begin{align*}
        \left.\frac{\partial(G_{l-1}-a_{l-1})}{\partial x_{i,l}}\right\vert_{p_{i,l}} = \left.\frac{\partial G}{\partial x_{i,1}}\right\vert_{p_{i,1}}.
    \end{align*}
    This concludes the induction step.
\end{proof}

Finally, we show how Proposition~\ref{prop:simple-pole-function} follows from Lemma~\ref{lemma:main-coord-computation}.

\begin{proof}[Proof of Proposition~\ref{prop:simple-pole-function}]
    Restrict the function $G_m$ from Lemma~\ref{lemma:main-coord-computation} to the curve $\tilde C\subset\tilde C_m$ to obtain a section of $\clO_{\tilde C}(p_{1,m} + \cdots + p_{r,m})\otimes T_{X,q}$. The morphism $\tilde C\to C$ contracts copies of $\bP^1$ on which $G_m$ restricts to a regular and therefore constant function with values in $T_{X,q}$. Thus, $G_m$ descends to a section of $\clO_C(p_1+\cdots+p_r)\otimes T_{X,q}$, which we denote as $G_m|_C$. 
    
    We can now define $\sigma$ by the formula $\sigma(\partial_t^{\otimes m}) = G_m|_C$. The $l = m$ case of Lemma~\ref{lemma:main-coord-computation} specifies the value of $\sigma(p_i)$, for $1\le i\le r$. Recalling the identities
    \begin{align*}
        x_{i,1} = x_i,\quad y_{i,m} = y_i,\quad \Phi_{p_i}(\partial_t^{\otimes m}) = \partial_{x_i}\otimes\partial_{y_i},\quad \left.\frac{\partial G}{\partial x_{i,1}}\right\vert_{p_{i,1}} = d(f|_\sE)_{p_i}(\partial_{x_i})
    \end{align*}
    gives the desired conclusion.
\end{proof}

\bibliographystyle{plain}
\bibliography{references}

\end{document}